\newtheorem{theorem}{Theorem}
\newtheorem{definition}[theorem]{Definition}
\newtheorem{proposition}[theorem]{Proposition}
\newtheorem{question}[theorem]{Question}
\begin{document}

\title[Local Representations of the Singular Braid Monoid]{Classification of Homogeneous Local Representations of the Singular Braid Monoid}

\author{Taher I. Mayassi}

\address{Taher I. Mayassi\\
Department of Mathematics and Physics\\
Lebanese International University\\
P.O. Box 146404, Beirut, Lebanon}

\email{taher.mayasi@liu.edu.lb}

\author{Mohamad N. Nasser}

\address{Mohamad N. Nasser\\
         Department of Mathematics and Computer Science\\
         Beirut Arab University\\
         P.O. Box 11-5020, Beirut, Lebanon}
         
\email{m.nasser@bau.edu.lb}

\begin{abstract}
For a natural number $n$, denote by $B_n$ the braid group on $n$ strings and by $SM_n$ the singular braid monoid on $n$ strings. $SM_n$ is one of the most important extensions of $B_n$. In \cite{17}, Y. Mikhalchishina classified all homogeneous $2$-local representations of $B_n$ for all $n \geq 3$. In this article, we extend the result of Mikhalchishina in two ways. First, we classify all homogeneous $3$-local representations of $B_n$ for all $n \geq 4$. Second, we classify all homogeneous $2$-local representations of $SM_n$ for all $n\geq 2$ and all homogeneous $3$-local representations of $SM_n$ for all $n\geq 4$.
\end{abstract}

\maketitle

\renewcommand{\thefootnote}{}
\footnote{\textit{Key words and phrases.} Braid Group, Singular Braid Monoid, Group Representations, Local Representations.}
\footnote{\textit{Mathematics Subject Classification.} Primary: 20F36.}

\section{Introduction} 

\vspace*{0.1cm}

The braid group on $n$ strings, $B_n$, is an abstract group with the so called Artin generators $\sigma_1,\sigma_2, \ldots,\sigma_{n-1}$. The singular braid monoid, $SM_n$, is the monoid generated by the Artin generators $\sigma_1^{\pm 1},\sigma_2^{\pm 1}, \ldots,\sigma_{n-1}^{\pm 1}$ of $B_n$ and the singular generators $\tau_1,\tau_2, \ldots, \tau_{n-1}$. We show the geometrical shapes of these generators in Section 2. $SM_n$ is one of the most important extensions of $B_n$. Its definition is introduced first by J. Baez in \cite{6} and J. Birman in \cite{7}. In \cite{14}, it is shown that $SM_n$ embeds into a group, $SB_n$, called the singular braid group. For more information on $SM_n$ and $SB_n$, the references \cite{8}, \cite{9}, and \cite{10} are left for the readers.

\vspace*{0.1cm}

Representations of the braid group $B_n$ and its extensions are important to researchers working on Representation Theory. One of the most important representations of $B_n$ are the local representations. A representation $\rho: B_n \rightarrow GL_{m}(\mathbb{Z}[t^{\pm 1}])$ is said to be $k$-local if it is of the form
$$\rho(\sigma_i) =\left( \begin{array}{c|@{}c|c@{}}
   \begin{matrix}
     I_{i-1} 
   \end{matrix} 
      & 0 & 0 \\
      \hline
    0 &\hspace{0.2cm} \begin{matrix}
   		M_i
   		\end{matrix}  & 0  \\
\hline
0 & 0 & I_{n-i-1}
\end{array} \right) \hspace*{0.2cm} \text{for} \hspace*{0.2cm} 1\leq i\leq n-1,$$ 
where $M_i \in M_k(\mathbb{Z}[t^{\pm 1}])$ with $k=m-n+2$ and $I_r$ is the $r\times r$ identity matrix. The $k$-local representation is said to be homogeneous if all the matrices $M_i$ are equal. The known Burau representation of $B_n$ defined in \cite{1} is one of the famous linear representations of $B_n$ which is homogeneous $2$-local.

\vspace*{0.1cm}

In \cite{13}, Bardakov, Chbili, and Kozlovskaya found a type of extensions of representations of $B_n$ to $SM_n$; we call them $\Phi$-type extensions. More precisely, they proved that if $\rho:B_n\rightarrow G_n$ is a representation of $B_n$ to a group $G_n$ and $\mathbb{K}$ is a field, then the map $\Phi_{a,b,c}:SM_n\rightarrow \mathbb{K}[G_n]$ defined by
\begin{align*}
&\Phi_{a,b,c}(\sigma_i^{\pm 1})=\rho(\sigma_i^{\pm 1}) \hspace{0.2cm} \text{and} \hspace{0.2cm} \Phi_{a,b,c}(\tau_i)=a\rho(\sigma_i)+b\rho(\sigma_i^{-1})+ce,\hspace{0.2cm} i=1,2,\ldots,n-1,
\end{align*}
where $a,b,c \in \mathbb{K}$ and $e$ is the neutral element of $G_n$, is a representation of $SM_n$, which is an extension of $\rho$. One of these family of representations is the Birman representation in case $\Phi$ is the identity map, $a=1,b=-1$ and $c=0$ \cite{7}, which is shown to be faithful by L. Paris in \cite{12}. M. Nasser studied the faithfulness of $\Phi_{a,b,c}$ is some cases \cite{11}.

\vspace*{0.1cm}

In \cite{40}, M. Nasser introduced a new definition of $k$-local representations of $SM_n$ that are extensions of $k$-local representations of $B_n$. This type of extensions is called $k$-local extensions. He considered several homogeneous $k$-local representations of $B_n$ and studied the relation between their $k$-local extensions and $\Phi$-type extensions to $SM_n$.

\vspace*{0.1cm}

Y. Mikhalchishina classified all $2$-local representations of $B_3$ and all homogeneous $2$-local representations of $B_n$ for all $n \geq 3$ \cite{17}. Moreover, she investigated the connection of these representations with the Burau representation. In \cite{20}, M. Chreif and M. Dally studied the irreducibility of $2$-local representations of $B_n$ in some cases.

\vspace*{0.1cm}

The aim of this paper is to extend the result of Mikhalchishina as follows. First, in Section 3, we classify all non-trivial homogeneous $2$-local representations of $SM_n$ for all $n\geq 2$ (Theorem \ref{n2} and Theorem \ref{n22}). Moreover, we classify all $\Phi$-type extensions of all non-trivial homogeneous $2$-local representations of $B_n$ to $SM_n$ for all $n\geq 3$ (Theorem \ref{88}). Second, in Section 4, we extend the result of Mikhalchishina and the result in Section 3 by classifying all non-trivial homogeneous $3$-local representations of $B_n$ and $SM_n$ for all $n\geq 4$ (Theorem \ref{repBn} and Theorem \ref{repSMn}).

\section{Generalities} 

\vspace*{0.1cm}

The braid group, $B_n$, has generators $\sigma_1,\sigma_2,\ldots,\sigma_{n-1}$ that satisfy the following relations.
\begin{align*}
&\sigma_i\sigma_{i+1}\sigma_i = \sigma_{i+1}\sigma_i\sigma_{i+1} ,\hspace{0.5cm} i=1,2,\ldots,n-2,\\
&\sigma_i\sigma_j = \sigma_j\sigma_i , \hspace{2.1cm} |i-j|\geq 2.
\end{align*}

On the other hand, the singular braid monoid, $SM_n$, is generated by the generators $\sigma_1^{\pm 1},\sigma_2^{\pm 1}, \ldots,\sigma_{n-1}^{\pm 1}$ of $B_n$ in addition to the singular generators $\tau_1,\tau_2, \ldots, \tau_{n-1}$. The generators $\sigma_i, \sigma_i^{-1},$ and $\tau_i$ of $SM_n$ satisfy the following relations.

\begin{align*}
(1) \hspace{1cm} \sigma_i\sigma_{i+1}\sigma_i = \sigma_{i+1}\sigma_i\sigma_{i+1} ,\hspace{.3cm} &\text{for} \hspace{.3cm} i=1,2,\ldots ,n-2,\vspace{0.1cm}\\ 
(2) \hspace{2.65cm} \sigma_i\sigma_j = \sigma_j\sigma_i , \hspace{.3cm} &\text{for} \hspace{.3cm} |i-j|\geq 2, \vspace{0.1cm}\\
(3) \hspace{2.85cm} \tau_i\tau_j = \tau_j\tau_i , \hspace{.3cm} &\text{for} \hspace{.3cm} |i-j|\geq 2, \vspace{0.1cm}\\
(4) \hspace{2.75cm} \tau_i\sigma_j=\sigma_j\tau_i, \hspace{.3cm} &\text{for} \hspace{.3cm} |i-j|\geq 2,  \vspace{0.1cm}\\
(5) \hspace{2.8cm} \tau_i\sigma_i=\sigma_i\tau_i, \hspace{.3cm} &\text{for} \hspace{.3cm} i=1,2,\ldots ,n-1,  \vspace{0.1cm}\\
(6) \hspace{1.1cm} \sigma_i\sigma_{i+1}\tau_i=\tau_{i+1}\sigma_i\sigma_{i+1}, \hspace{.3cm} &\text{for} \hspace{.3cm} i=1,2,\ldots ,n-2,  \vspace{0.1cm}\\
(7) \hspace{1.1cm} \sigma_{i+1}\sigma_{i}\tau_{i+1}=\tau_{i}\sigma_{i+1}\sigma_{i}, \hspace{.3cm} &\text{for} \hspace{.3cm} i=1,2,\ldots ,n-2.\\
\end{align*}

The relations (4), (5), (6), and (7) are called the mixed relations.

\vspace*{0.1cm}

In the following figures we present the geometrical interpretation of the generators $\sigma_i, \sigma_i^{-1},$ and $\tau_i$ of $SM_n$.

\vspace*{0.1cm}

\begin{center}
\begin{tikzpicture}
 \draw[thick] (-1.5,0)--(-1.5,2); 
    \fill (-1,1) circle(1.5pt) (-1.2,1) circle(1.5pt)(-0.8,1)circle(1.5pt);       
    \draw[thick] (-.5,0)--(-.5,2);
    \draw[thick] (2.5,0)--(2.5,2); 
    \fill (2,1) circle(1.5pt) (2.2,1) circle(1.5pt)
    (1.8,1)circle(1.5pt);   
        
    \draw[thick] (1.5,0)--(1.5,2);
    \draw[thick] (0,0) to[out=90, in=-90] (1,2);
    \draw[thick, white, line width=4pt] (1,0) to[out=90, in=-90] (0,2); 
    \draw[thick] (1,0) to[out=90, in=-90] (0,2);
    \node[above] at(0,2){$i$};
    \node[above] at(1,2){$i+1$};
    
\node at (3.5,1){,};    

 \draw[thick] (5.5,0)--(5.5,2); 
    \fill (5,1) circle(1.5pt) (4.8,1) circle(1.5pt)(5.2,1)circle(1.5pt);       
    \draw[thick] (4.5,0)--(4.5,2);
    \draw[thick] (8.5,0)--(8.5,2); 
    \fill (8,1) circle(1.5pt) (8.2,1) circle(1.5pt)(7.8,1)circle(1.5pt);       
    \draw[thick] (7.5,0)--(7.5,2);    
    
\draw[thick] (7,0) to[out=90, in=-90] (6,2);
    \draw[thick, white, line width=4pt] (6,0) to[out=90, in=-90] (7,2); 
    \draw[thick] (6,0) to[out=90, in=-90] (7,2);
    \node[above] at(6,2){$i$};
    \node[above] at (7,2) {$i+1$};
    
    \node at (3.5, -0.5) {The braid generators \(\sigma_i\) and \(\sigma_i^{-1}\)};
\end{tikzpicture}
\end{center}
\vspace{0.2cm}
\begin{center}
\begin{tikzpicture}
    \draw[thick] (-1.5,0)--(-1.5,2); 
    \fill (-1,1) circle(1.5pt) (-1.2,1) circle(1.5pt)(-0.8,1)circle(1.5pt);       
    \draw[thick] (-.5,0)--(-.5,2);
    \draw[thick] (2.5,0)--(2.5,2); 
    \fill (2,1) circle(1.5pt) (2.2,1) circle(1.5pt)(1.8,1)circle(1.5pt);       
    \draw[thick] (1.5,0)--(1.5,2);
    \draw[thick] (0,0) to[out=90, in=-90] (1,2);
    \draw[thick, white, line width=4pt] (1,0) to[out=90, in=-90] (0,2); 
    \draw[thick] (1,0) to[out=90, in=-90] (0,2);
    \fill[black] (0.5,1) circle (2pt); 
    \node[above] at(0,2){$i$};
    \node[above] at(1,2){$i+1$};
    \node at (0.5, -0.5) {The singular generators \(\tau_i\)};
\end{tikzpicture}
\end{center}

\vspace*{0.1cm}

Now, we give the concept of $k$-local representations of the braid group $B_n$.

\vspace*{0.1cm}

\begin{definition}
A representation $\rho: B_n \rightarrow GL_{m}(\mathbb{Z}[t^{\pm 1}])$ is said to be $k$-local if it is of the form
$$\rho(\sigma_i) =\left( \begin{array}{c|@{}c|c@{}}
   \begin{matrix}
     I_{i-1} 
   \end{matrix} 
      & 0 & 0 \\
      \hline
    0 &\hspace{0.2cm} \begin{matrix}
   		M_i
   		\end{matrix}  & 0  \\
\hline
0 & 0 & I_{n-i-1}
\end{array} \right) \hspace*{0.2cm} \text{for} \hspace*{0.2cm} 1\leq i\leq n-1,$$ 
where $M_i \in M_k(\mathbb{Z}[t^{\pm 1}])$ with $k=m-n+2$ and $I_r$ is the $r\times r$ identity matrix. The $k$-local representation is said to be homogeneous if all the matrices $M_i$ are equal.
\end{definition}

\vspace*{0.1cm}

In the next definition, we extend the concept of $k$-local representations of the braid group $B_n$ to the singular braid monoid $SM_n$.

\vspace*{0.1cm}

\begin{definition} \label{LocalSM}
Let $\rho: B_n \rightarrow GL_{m}(\mathbb{Z}[t^{\pm 1}])$ be a $k$-local representation. A $k$-local extension of $\rho$ to $SM_n$ is a representation $\rho': SM_n \rightarrow M_{m}(\mathbb{Z}[t^{\pm 1}])$ that extends $\rho$ and has the form

$$\rho'(\sigma_i)=\rho(\sigma_i) \hspace*{0.2cm} \text{for} \hspace*{0.2cm} 1\leq i\leq n-1,$$
and
$$\rho'(\tau_i) =\left( \begin{array}{c|@{}c|c@{}}
   \begin{matrix}
     I_{i-1} 
   \end{matrix} 
      & 0 & 0 \\
      \hline
    0 &\hspace{0.2cm} \begin{matrix}
   		N_i
   		\end{matrix}  & 0  \\
\hline
0 & 0 & I_{n-i-1}
\end{array} \right) \hspace*{0.2cm} \text{for} \hspace*{0.2cm} 1\leq i\leq n-1,$$ 
where $N_i \in M_k(\mathbb{Z}[t^{\pm 1}])$ with $k=m-n+2$ and $I_r$ is the $r\times r$ identity matrix. The $k$-local extension is said to be homogeneous if all the matrices $N_i$ are equal.
\end{definition}

\vspace*{0.1cm}

In what follows, we define the concept of $\Phi$-type extensions of representations of the braid group $B_n$ to the singular braid monoid $SM_n$. First, we introduce a proposition that was given by Bardakov, Chbili, and Kozlovskaya in \cite{13}.

\vspace*{0.1cm}

\begin{proposition} \cite{13}
Let $\rho: B_n \rightarrow G_n$ be a representation of the braid group $B_n$ to a group $G_n$ and let $\mathbb{K}$ be a field with $a,b,c \in \mathbb{K}$. Then, the map $\Phi_{a,b,c}:SM_n\rightarrow \mathbb{K}[G_n]$ which acts on the generators of $SM_n$ by the rules
\begin{align*}
&\Phi_{a,b,c}(\sigma_i^{\pm 1})=\rho(\sigma_i^{\pm 1}) \hspace{0.2cm} \text{and} \hspace{0.2cm} \Phi_{a,b,c}(\tau_i)=a\rho(\sigma_i)+b\rho(\sigma_i^{-1})+ce,\hspace{0.2cm} i=1,2,\ldots,n-1,
\end{align*}
defines a representation of $SM_n$ to $\mathbb{K}[G_n]$. Here $e$ is a neutral element of $G_n$.
\end{proposition}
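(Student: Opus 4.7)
The plan is to verify that the assignment $\Phi_{a,b,c}$ respects each of the defining relations (1)--(7) of $SM_n$, since any monoid map from generators extends to a well-defined monoid homomorphism iff it preserves all relators. Because $\rho$ is already a representation of $B_n$, the braid relations (1) and the far-commutativity (2) are automatic for $\Phi_{a,b,c}(\sigma_i^{\pm 1}) = \rho(\sigma_i^{\pm 1})$. What actually needs checking are the mixed relations (3)--(7) and the singular commutation (3), all of which involve $\Phi_{a,b,c}(\tau_i) = a\rho(\sigma_i) + b\rho(\sigma_i^{-1}) + ce$.

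For the easy relations, I would proceed as follows. Relation (5), $\tau_i\sigma_i = \sigma_i\tau_i$, is immediate because each of $\rho(\sigma_i), \rho(\sigma_i^{-1}), e$ commutes with $\rho(\sigma_i)$. For (3) and (4), when $|i-j|\ge 2$, relation (2) in $B_n$ gives that $\rho(\sigma_i^{\pm 1})$ commutes with $\rho(\sigma_j^{\pm 1})$; expanding the three-term expressions for $\Phi_{a,b,c}(\tau_i)\Phi_{a,b,c}(\tau_j)$ and $\Phi_{a,b,c}(\tau_j)\Phi_{a,b,c}(\tau_i)$ term by term then gives equality, and similarly for $\tau_i\sigma_j = \sigma_j\tau_i$.

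The main obstacle, and the only place that really uses the structure of $B_n$, will be the mixed relations (6) and (7). For (6), expanding both sides of $\Phi_{a,b,c}(\sigma_i\sigma_{i+1}\tau_i) = \Phi_{a,b,c}(\tau_{i+1}\sigma_i\sigma_{i+1})$ in $\mathbb{K}[G_n]$ and comparing coefficients of $a$, $b$, $c$ reduces the problem to three identities in $B_n$:
\begin{align*}
\sigma_i\sigma_{i+1}\sigma_i &= \sigma_{i+1}\sigma_i\sigma_{i+1},\\
\sigma_i\sigma_{i+1}\sigma_i^{-1} &= \sigma_{i+1}^{-1}\sigma_i\sigma_{i+1},\\
\sigma_i\sigma_{i+1} &= \sigma_i\sigma_{i+1}.
\end{align*}
The first is the braid relation, the third is trivial, and the second is its direct consequence obtained by multiplying the braid relation on the left by $\sigma_{i+1}^{-1}$ and on the right by $\sigma_i^{-1}$. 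Relation (7) is handled by the symmetric computation: the $a$-coefficient gives the braid relation, the $c$-coefficient is trivial, and the $b$-coefficient needs $\sigma_{i+1}\sigma_i\sigma_{i+1}^{-1} = \sigma_i^{-1}\sigma_{i+1}\sigma_i$, which is the same consequence rearranged.

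Once all seven relations are verified, $\Phi_{a,b,c}$ factors through the monoid presentation of $SM_n$ and therefore defines a monoid homomorphism into the multiplicative monoid of $\mathbb{K}[G_n]$, which is what is claimed. The only subtle point worth flagging is that (6) and (7) explicitly require the conjugation identity $\sigma_i\sigma_{i+1}\sigma_i^{-1} = \sigma_{i+1}^{-1}\sigma_i\sigma_{i+1}$, which is why the definition of $\Phi_{a,b,c}(\tau_i)$ is forced to use $\rho(\sigma_i^{-1})$ with the \emph{same} index rather than some other element --- any other choice would fail to balance the $b$-coefficient on the two sides of a mixed relation.
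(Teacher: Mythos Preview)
Your proof is correct: verifying that $\Phi_{a,b,c}$ respects each of the defining relations (1)--(7) of $SM_n$ is exactly the right approach, and your reductions for (6) and (7) to the braid relation and its conjugation consequence $\sigma_i\sigma_{i+1}\sigma_i^{-1} = \sigma_{i+1}^{-1}\sigma_i\sigma_{i+1}$ are accurate. The paper does not supply its own proof of this proposition---it is quoted from \cite{13} and stated without argument---so there is nothing in the present paper to compare against; your direct verification is the standard proof.
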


\vspace*{0.1cm}

Now, we give the definition of $\Phi$-type extensions of representations of braid group $B_n$ to the singular braid monoid $SM_n$.

\vspace*{0.1cm}

\begin{definition}
Let $\rho: B_n \rightarrow G_n$ be a representation of the braid group $B_n$ to a group $G_n$ and let $\hat{\rho}: SM_n \rightarrow \mathbb{K}[G_n]$ be a representation of $SM_n$ that extends $\rho$, where $\mathbb{K}$ is a field. We say that $\hat{\rho}$ is a $\Phi$-type extension of $\rho$ to $SM_n$ if there exist $a,b,c \in \mathbb{K}$ such that $\hat{\rho}=\Phi_{a,b,c}$.
\end{definition}

\vspace*{0.1cm}

In what follows, we give the main definitions and basic results of specific two local representations of the braid group $B_n$. The first representation is the known Burau representation, which is a homogeneous $2$-local representation of $B_n$. The second representation is the $F$-representation, which is a homogeneous $3$-local representation of $B_n$. The importance of choosing such representations is to present some special cases of our obtained results.

\vspace*{0.1cm}

We start by the main definitions and results of the homogeneous $2$-local representation of $B_n$, the known Burau representation.

\vspace*{0.1cm}

\begin{definition} \cite{1} \label{defBurau}
Let $t$ be indeterminate. The Burau representation $\rho_B: B_n\rightarrow GL_n(\mathbb{Z}[t^{\pm 1}])$ is the representation given by
$$\sigma_i\rightarrow \left( \begin{array}{c|@{}c|c@{}}
   \begin{matrix}
     I_{i-1} 
   \end{matrix} 
      & 0 & 0 \\
      \hline
    0 &\hspace{0.2cm} \begin{matrix}
   	1-t & t\\
   	1 & 0\\
\end{matrix}  & 0  \\
\hline
0 & 0 & I_{n-i-1}
\end{array} \right) \hspace*{0.2cm} \text{for} \hspace*{0.2cm} 1\leq i\leq n-1.$$ 
\end{definition}

\vspace*{0.1cm}

It has been proven in \cite{75} that the Burau representation is reducible for $n\geq 3$, and the complex specialization of the reduced Burau representation is given in the following.

\vspace*{0.1cm}

\begin{theorem}\cite{75} \label{bure}
The Burau representation is reducible for $n\geq 3$, and the reduced Burau representation $\mu_{B}: B_n\rightarrow GL_{n-1}(\mathbb{Z}[t^{\pm 1}])$ is defined by\\

$\sigma_1\rightarrow \left( \begin{array}{c|@{}c@{}}
   \begin{matrix}
     -t & 1 
   \end{matrix} & 0 \\
      \begin{matrix}
     0 & 1 
   \end{matrix} & 0 \\
   \hline
         \begin{matrix}
     0 & 0 
   \end{matrix} & I_{n-3} \\
\end{array} \right), \hspace{0.1cm} \sigma_{n-1}\rightarrow \left( \begin{array}{c|@{}c@{}}
   I_{n-3} & \begin{matrix}
   0 & 0 \\
   \end{matrix} \\
   \hline
   0 & \begin{matrix}
   \hspace*{0.1cm} 1 & 0 \\
   \end{matrix}\\
   0 & \begin{matrix}
   \hspace*{0.1cm} t & -t \\
   \end{matrix}\\
\end{array} \right),$ and

$$\sigma_i\mapsto \left( \begin{array}{c|@{}c|c@{}}
   \begin{matrix}
     I_{i-2} 
   \end{matrix} 
       &  0 & 0\\
      \hline
    0 &\hspace{0.2cm} \begin{matrix}
   	1 & 0 & 0 \\
   	t & -t & 1 \\
   	0 & 0 & 1 \\
\end{matrix}  & 0  \\
\hline
0 & 0  & I_{n-i-2}
\end{array} \right) \hspace*{0.2cm} for \hspace*{0.2cm} 2\leq i \leq n-2.$$ 
\end{theorem}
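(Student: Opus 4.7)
The plan is to exhibit an explicit invariant one-dimensional submodule of $\rho_B$, pass to the quotient, and then read off the matrices of $\mu_B$ in a carefully chosen basis.

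First, observe that the Burau block
$$\begin{pmatrix} 1-t & t \\ 1 & 0 \end{pmatrix}$$
has both rows summing to $1$, so the vector $v=e_1+e_2+\cdots+e_n$ is fixed by every $\rho_B(\sigma_i)$. Thus $\langle v\rangle$ is a $\rho_B$-invariant submodule of $\mathbb{Z}[t^{\pm 1}]^n$, and for $n\geq 3$ it is a proper nontrivial submodule; this proves reducibility.

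To compute the induced representation $\mu_B$ on the quotient $V/\langle v\rangle$, I would choose the basis $h_k = \bar e_1+\bar e_2+\cdots+\bar e_k$ for $k=1,\dots,n-1$. With the conventions $h_0=0$ and $h_n=\bar v=0$, one has $\bar e_k = h_k - h_{k-1}$ for every $1\leq k\leq n$. A direct telescoping computation based on $\rho_B(\sigma_i)\bar e_i=(1-t)\bar e_i+\bar e_{i+1}$, $\rho_B(\sigma_i)\bar e_{i+1}=t\bar e_i$, and $\rho_B(\sigma_i)\bar e_j=\bar e_j$ for $j\neq i,i+1$ then gives
$$\mu_B(\sigma_i)\,h_k = h_k \quad\text{for } k\neq i, \qquad \mu_B(\sigma_i)\,h_i = t\,h_{i-1} - t\,h_i + h_{i+1},$$
for every $1\leq i\leq n-1$.

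These rules translate directly into the three matrix forms in the statement. For $2\leq i\leq n-2$ only the $i$-th row of $\mu_B(\sigma_i)$ is nontrivial, namely $(0,\dots,0,t,-t,1,0,\dots,0)$ with the entries in columns $i-1,i,i+1$, and together with the identity rows at positions $i-1$ and $i+1$ this is exactly the displayed $3\times 3$ block. The two endpoint cases are handled uniformly by the conventions $h_0=h_n=0$: at $i=1$ the term $t\,h_0$ vanishes, producing the stated top-left $2\times 2$ block; at $i=n-1$ the term $h_n$ vanishes, producing the stated bottom-right $2\times 2$ block. The only genuinely computational step is the identity for $\mu_B(\sigma_i)h_i$, which is a three-line calculation; the rest is routine bookkeeping and presents no conceptual obstacle.
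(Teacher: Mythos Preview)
The paper does not give its own proof of this theorem; it is simply quoted from Formanek~\cite{75}. Your argument for reducibility is the standard one and is correct: each row of the Burau block sums to~$1$, so the all--ones column vector $v=e_1+\cdots+e_n$ is fixed under left multiplication by every $\rho_B(\sigma_i)$, and for $n\ge 3$ the line $\langle v\rangle$ is a proper nontrivial submodule.

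Your computation on the quotient is also correct: in the basis $h_1,\dots,h_{n-1}$ one really does have $\mu_B(\sigma_i)h_k=h_k$ for $k\neq i$ and $\mu_B(\sigma_i)h_i=t\,h_{i-1}-t\,h_i+h_{i+1}$. The slip is in the final translation to matrices. Throughout you are using the left--action--on--columns convention (this is exactly how ``row sums equal $1$'' yields a fixed column vector), so the matrix of $\mu_B(\sigma_i)$ in your basis has only its $i$-th \emph{column} different from the identity, with entries $(t,-t,1)^{T}$ sitting in rows $i-1,i,i+1$. The matrices in the statement instead have the $i$-th \emph{row} equal to $(t,-t,1)$; they are the transposes of what your construction produces. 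For instance, your $\sigma_1$ block comes out as $\begin{pmatrix}-t&0\\ 1&1\end{pmatrix}$, not the stated $\begin{pmatrix}-t&1\\ 0&1\end{pmatrix}$.

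This is not a fatal gap---the braid relations are palindromic, so transposing every generator matrix again yields a representation of $B_n$, and the two are dual to each other---but as written your last paragraph does not justify the specific matrices displayed. To land exactly on the stated form you should either (i) carry out the dual construction, restricting to the kernel of the invariant functional $\phi(x)=\sum_k t^{k-1}x_k$ rather than passing to the quotient by $\langle v\rangle$, and choose the basis there; or (ii) keep your quotient computation but acknowledge the transpose and explain why it still defines a representation with the same reducibility properties.
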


\begin{theorem}\cite{75}
For $z \in \mathbb{C}$, the reduced burau representation $\mu_B: B_n \mapsto GL_{n-1}(\mathbb{C})$ is irreducible if and only if $z$ is not a root of the polynomial $f_n(x)=x^{n-1}+x^{n-2}+\ldots+x+1.$
\end{theorem}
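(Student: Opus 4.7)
My plan is to prove the two directions of the equivalence separately.

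For the direction ``if $f_n(z) = 0$ then $\mu_B$ is reducible,'' I would exhibit an explicit common fixed vector. Motivated by the small case $n = 3$ and by the recurrence inherent in the block structure of $\mu_B(\sigma_i)$, a natural candidate is $v = (w_1, w_2, \ldots, w_{n-1})^T$ with $w_j = 1 + z + z^2 + \cdots + z^{j-1}$. Using the matrices in Theorem \ref{bure}, one checks directly that $\mu_B(\sigma_1) v = v$ and $\mu_B(\sigma_i) v = v$ for $2 \le i \le n-2$ hold identically in $z$; the middle family reduces to the second-order recurrence $w_{i+1} = (1+z) w_i - z w_{i-1}$, which is immediate from $w_j = (1-z^j)/(1-z)$. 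The last condition $\mu_B(\sigma_{n-1}) v = v$ amounts to $(1+z) w_{n-1} - z w_{n-2} = 0$, which after expansion equals $f_n(z)$. Hence whenever $f_n(z) = 0$, the line $\mathbb{C} v$ is a proper nonzero $\mu_B$-invariant subspace.

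For the converse, I would exploit the structural fact that each $\mu_B(\sigma_i) - I$ has rank one, with image the coordinate line $L_i = \mathbb{C} e_i$ and kernel the hyperplane $H_i$. A standard dichotomy for invariant subspaces of such transvection-like operators says: if $W$ is $\mu_B(\sigma_i)$-invariant, then either $L_i \subseteq W$ or $W \subseteq H_i$. Partition $\{1, \ldots, n-1\}$ into $I_L$ and $I_H$ accordingly. A short explicit calculation gives $\mu_B(\sigma_{i-1}) e_i = e_{i-1} + e_i$ and $\mu_B(\sigma_{i+1}) e_i = e_i + z\, e_{i+1}$, so $e_i \notin H_{i\pm 1}$; consequently $i \in I_L$ forbids $i \pm 1 \in I_H$. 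Combined with $I_L \cup I_H = \{1, \ldots, n-1\}$, this forces one of the two sets to be empty. The case $I_H = \emptyset$ puts all $e_i$ into $W$, hence $W = \mathbb{C}^{n-1}$, contradicting properness. The case $I_L = \emptyset$ forces $W \subseteq \bigcap_i H_i$; solving the tridiagonal linear system defining this intersection (the same recurrence $v_{i+1} = (1+z) v_i - z v_{i-1}$) yields $v_j = w_j v_1$, and the final equation from $H_{n-1}$ then reduces precisely to $f_n(z) \cdot v_1 = 0$. Since $W$ is nonzero, we conclude $f_n(z) = 0$.

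The main obstacle will be the second direction: verifying the dichotomy lemma, checking the non-membership $e_i \notin H_{i\pm 1}$ that rules out mixed partitions of $\{1,\ldots,n-1\}$, and finally extracting $f_n(z)$ from the consistency of the system defining $\bigcap_i H_i$. It is reassuring that this last computation is essentially dual to the fixed-vector construction in the first direction: the same vector $v$ that spans the invariant line there spans the kernel of the system here, so the two halves of the proof close up on the same polynomial identity.
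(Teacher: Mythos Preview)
The paper does not give its own proof of this statement: it is quoted verbatim as a known result from \cite{75} (Formanek) and used only as background, so there is no argument in the paper to compare yours against. With that caveat, your outline is sound and is in fact the standard way this result is established. The fixed-vector computation for the forward direction is correct: the recurrence $w_{i+1}=(1+z)w_i-zw_{i-1}$ holds for the partial sums $w_j=1+z+\cdots+z^{j-1}$, and the last equation $(1+z)w_{n-1}-zw_{n-2}=w_n=f_n(z)$ closes the loop.

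Two small points to tighten. First, the assertion $e_i\notin H_{i+1}$ uses $\mu_B(\sigma_{i+1})e_i-e_i=z\,e_{i+1}\neq 0$, which needs $z\neq 0$; this is implicit in the hypothesis $\mu_B:B_n\to GL_{n-1}(\mathbb{C})$, since the matrices in Theorem~\ref{bure} are singular at $z=0$, but it is worth saying. Second, your phrase ``forces one of the two sets to be empty'' slightly overstates what the propagation shows: what you actually prove is that $I_L$ is either empty or all of $\{1,\ldots,n-1\}$, and in the latter case $W=\mathbb{C}^{n-1}$ regardless of whether $I_H$ is empty (the sets $I_L$ and $I_H$ need not be disjoint, only covering). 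The case split should therefore be $I_L=\{1,\ldots,n-1\}$ versus $I_L=\emptyset$, and then your two conclusions follow exactly as you wrote. With those adjustments the argument is complete.
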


\vspace*{0.1cm}

Now we give the main definitions and results of the homogeneous $3$-local representation of $B_n$, the $F$-representation, which is given by V. Bardakov and P. Bellingeri in \cite{19}.

\vspace*{0.1cm}

\begin{definition} \cite{19} \label{Fdef}
Let $t$ be indeterminate and let $\{v_o,v_1,\ldots, v_n\}$ be a basis of the free $\mathbb{Z}[t^{\pm 1}]$-module of rank $n+1$. We define the $F$-representation $\rho_F: B_n \rightarrow GL_{n+1}(\mathbb{Z}[t^{\pm 1}])$ by
\begin{align*}
\rho_F(\sigma_i):
\left\{\begin{array}{l}
v_i\rightarrow v_{i-1}-tv_i+tv_{i+1},\\
v_k\rightarrow v_k \hspace*{3 cm} k \neq i,\\
\end{array} \right. 
\end{align*}
for $1\leq i\leq n-1$. That is,\vspace*{0.2cm}
$$\sigma_i\rightarrow \left( \begin{array}{c|@{}c|c@{}}
   \begin{matrix}
     I_{i-1} 
   \end{matrix} 
      & 0 & 0 \\
      \hline
    0 &\hspace{0.2cm} \begin{matrix}
   		1 & 1 & 0 \\
   		0 &  -t & 0 \\   		
   		0 &  t & 1 \\
   		\end{matrix}  & 0  \\
\hline
0 & 0 & I_{n-i-1}
\end{array} \right) \hspace*{0.2cm} \text{for} \hspace*{0.2cm} 1\leq i\leq n-1.$$ 
\end{definition}

\vspace*{0.1cm}

It has been proven in \cite{55} that the $F$-representation is reducible for $n\geq 3$, and the reduced $F$-representation is given in the following.

\vspace*{0.1cm}

\begin{theorem}\cite{55} \label{Fre}
The $F$-representation is reducible for $n\geq 3$, and the reduced $F$-representation $\mu_{F}: B_n\mapsto GL_{n-1}(\mathbb{Z}[t^{\pm 1}])$ is defined by

$$\sigma_1\rightarrow \left( \begin{array}{c@{}c|c@{}}
   \begin{matrix}
     -1-t & 0 & 0 & 0 & \ldots & 0\\ 
     -1+t & 1 & 0 & 0 & \ldots & 0 \\
     -1 & 0 & 1 & 0 & \ldots & 0 \\

     \vdots & \vdots & & \ddots &  & \vdots \\
     -1 & 0 & \ldots & 0 & 1 & 0 \\
     -1 & 0 & \ldots & 0 & 0 & 1 \\
   \end{matrix} 
    
\end{array} \right), \hspace{0.1cm} \sigma_{n-1}\rightarrow \left( \begin{array}{c|@{}c c@{}}
   \begin{matrix}
     I_{n-3} 
   \end{matrix} 
      & 0  \\
      \hline
    0 &\hspace{0.2cm} \begin{matrix}
   		1 &  1  \\
   		0 &  -t  \\   		
   		\end{matrix}  \\
\end{array} \right), \text{ and}$$

$$\sigma_i\rightarrow \left( \begin{array}{c|@{}c|c@{}}
   \begin{matrix}
     I_{i-2} 
   \end{matrix} 
      & 0 & 0 \\
      \hline
    0 &\hspace{0.2cm} \begin{matrix}
   		1 &  1 & 0 \\
   		0 &  -t & 0 \\   		
   		0 &  t & 1 \\
   		\end{matrix}  & 0  \\
\hline
0 & 0 & I_{n-i-2}
\end{array} \right) \hspace*{0.2cm} for \hspace*{0.2cm} 2\leq i\leq n-2 \hspace*{0.2cm}.$$ 

\end{theorem}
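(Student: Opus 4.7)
The plan is to exhibit a proper, non-trivial, invariant submodule of $\rho_F$ and then to identify $\mu_F$ with the induced action of $B_n$ on the corresponding quotient.

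I would first inspect the defining formulas of Definition \ref{Fdef}. For any generator $\sigma_i$ with $1 \leq i \leq n-1$, the only basis vector moved by $\rho_F(\sigma_i)$ is $v_i$, and its image $v_{i-1} - t v_i + t v_{i+1}$ involves only basis vectors indexed by $\{i-1, i, i+1\}$, which never contains $0$ or $n$. Hence $\rho_F(\sigma_i)(v_0) = v_0$ and $\rho_F(\sigma_i)(v_n) = v_n$ for every $i$, so the rank-$2$ submodule $W := \langle v_0, v_n \rangle$ is pointwise fixed by the whole image of $\rho_F$. For $n \geq 3$, $W$ is a proper non-trivial invariant submodule, which proves the reducibility claim.

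Next I would define $\mu_F$ as the action of $B_n$ on the quotient $V / W$, a free $\mathbb{Z}[t^{\pm 1}]$-module of rank $n-1$ with basis $\{\bar{v}_1, \ldots, \bar{v}_{n-1}\}$ satisfying $\bar{v}_0 = 0 = \bar{v}_n$. Because $\mu_F$ is obtained from $\rho_F$ by passage to a quotient, the braid relations are inherited automatically, so no relation-checking is required. The explicit matrices are then read off by computing $\sigma_i(\bar{v}_j)$ for $1 \le i, j \le n-1$: when $j \neq i$ one has $\sigma_i(\bar{v}_j) = \bar{v}_j$, contributing the column $e_j$, and when $j = i$ one has $\sigma_i(\bar{v}_i) = \bar{v}_{i-1} - t \bar{v}_i + t \bar{v}_{i+1}$. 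In the interior range $2 \le i \le n-2$, both $\bar{v}_{i-1}$ and $\bar{v}_{i+1}$ are genuine basis vectors, and the matrix acquires the stated $3 \times 3$ block embedded in an $(n-1) \times (n-1)$ identity.

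At the two boundary cases the formulas truncate: for $i = 1$ the term $\bar{v}_0$ vanishes and for $i = n-1$ the term $\bar{v}_n$ vanishes, producing the two special boundary matrices displayed in the theorem. The only genuine care required in this program is keeping track of these boundary truncations so that the printed forms of $\mu_F(\sigma_1)$ and $\mu_F(\sigma_{n-1})$ are recovered exactly; once that bookkeeping is handled, the identification of $\mu_F$ with the stated matrices is immediate, and invertibility over $\mathbb{Z}[t^{\pm 1}]$ follows because each non-identity block is triangular with determinant a unit power of $-t$.
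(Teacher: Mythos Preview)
Your reducibility argument is essentially fine: each $\rho_F(\sigma_i)$ fixes $v_0$ and $v_n$ simply because the defining rule says $v_k\mapsto v_k$ for $k\neq i$ and $i$ ranges only over $1,\ldots,n-1$. (Your phrasing ``$\{i-1,i,i+1\}$ never contains $0$ or $n$'' is actually false at the endpoints $i=1$ and $i=n-1$, but fortunately the conclusion does not depend on it.)

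The real problem is your identification of $\mu_F$ with the quotient $V/\langle v_0,v_n\rangle$. Carry out the boundary computation you deferred: in that quotient $\bar v_0=0$, so
\[
\sigma_1(\bar v_1)=\bar v_0-t\bar v_1+t\bar v_2=-t\,\bar v_1+t\,\bar v_2,
\]
which makes the first column of $\mu_F(\sigma_1)$ equal to $(-t,\;t,\;0,\;\ldots,\;0)^{T}$. This does \emph{not} agree with the matrix displayed in the theorem, whose first column is $(-1-t,\;-1+t,\;-1,\;\ldots,\;-1)^{T}$. So the ``only genuine care required'' step is precisely where your plan breaks down.

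In fact the discrepancy is not a matter of basis choice within the same subquotient. The displayed $\mu_F(\sigma_1)$ has $-1-t$ as an eigenvalue (trace $-1-t+(n-2)$, with $1$ of multiplicity $n-2$), whereas every subquotient of $\rho_F(\sigma_1)$ can only have eigenvalues among $\{-t,\,1\}$. Hence the $\mu_F$ recorded here cannot arise as the naive quotient you propose; the construction in \cite{55} (which the present paper cites without reproducing a proof) evidently passes through a different invariant filtration or change of basis, and you would need to consult that source to recover the stated form of $\mu_F(\sigma_1)$.
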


\vspace*{0.1cm}

\begin{theorem} \cite{55}
For $z \in \mathbb{C}$, the representation $\mu_F: B_n \rightarrow GL_{n-1}(\mathbb{C})$ is irreducible if and only if $z$ is not a root of the polynomial $g_n(x)=x^{n-1}+2x^{n-2}+3x^{n-3}+\ldots+(n-1)x+n$.
\end{theorem}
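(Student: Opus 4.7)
The plan is to exploit the fact that each of the matrices $\mu_F(\sigma_i)-I$ has rank one. Write $\{e_1,\ldots,e_{n-1}\}$ for the standard basis of $\mathbb{C}^{n-1}$, a generic vector as $v=\sum_j a_j e_j$, and specialize $t=z$. A direct inspection of the matrices in Theorem~\ref{Fre} shows that the image of $\mu_F(\sigma_i)-I$ is spanned by a single vector
\[
v_i \;=\;
\begin{cases}
-(2+z)e_1+(z-1)e_2-e_3-\cdots-e_{n-1}, & i=1,\\
e_{i-1}-(1+z)e_i+z\,e_{i+1}, & 2\le i\le n-2,\\
e_{n-2}-(1+z)e_{n-1}, & i=n-1,
\end{cases}
\]
and that $\ker(\mu_F(\sigma_i)-I)=\{v:a_i=0\}$ in every case. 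As an immediate consequence, the subspace $W:=\mathrm{span}\{v_1,\ldots,v_{n-1}\}$ is $\mu_F$-invariant: for $w\in W$, $\mu_F(\sigma_i)w=w+(\mu_F(\sigma_i)-I)w$, and the second term lies in $\mathrm{span}\{v_i\}\subset W$.

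The next step is to compute $D_n(z):=\det(v_1\mid v_2\mid\cdots\mid v_{n-1})$. Expanding along the last column, whose only nonzero entries are $1$ in row $n-2$ and $-(1+z)$ in row $n-1$, yields a recursion of the form $D_n(z)=-N_n(z)-(1+z)\,D_{n-1}(z)$, where $N_n(z)$ is an $(n-2)\times(n-2)$ minor. A further expansion of $N_n$ along its last row expresses it in terms of $D_{n-2}(z)$, and coupling this with the elementary identity $g_n(z)=z\,g_{n-1}(z)+n$ for the target polynomial, an induction on $n$ starting from $D_3(z)=z^2+2z+3=g_3(z)$ produces
\[
D_n(z) \;=\; (-1)^{n-1}g_n(z).
\]
Hence when $g_n(z)=0$, the vectors $v_i$ become linearly dependent, so $W$ is a proper nonzero invariant subspace and $\mu_F$ is reducible.

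For the converse, suppose $U$ is a nonzero invariant subspace of $\mathbb{C}^{n-1}$. The rank-one structure of each $\mu_F(\sigma_i)-I$ forces, for every $i$, either $v_i\in U$ (whenever $(\mu_F(\sigma_i)-I)U\ne 0$) or $U\subset\ker(\mu_F(\sigma_i)-I)=\{v:a_i=0\}$. Set $T:=\{i:v_i\in U\}$; then $U\subset\bigcap_{i\notin T}\{v:a_i=0\}=\mathrm{span}\{e_j:j\in T\}$, so every $v_i$ with $i\in T$ must itself have support inside $T$. Reading off the coefficients of the $v_i$'s: for $i\in T\cap\{2,\ldots,n-2\}$ the value $1$ at position $i-1$ forces $i-1\in T$, and the value $z$ at position $i+1$ forces $i+1\in T$ whenever $z\ne 0$; if $1\in T$, the $-1$ entries at positions $3,\ldots,n-1$ force $\{3,\ldots,n-1\}\subset T$; and if $n-1\in T$, then $n-2\in T$. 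A routine case analysis on the smallest and largest indices of $T$, treating separately the exceptional values $z\in\{0,\pm 1,-2\}$ at which some coefficient may vanish, shows that for $n\ge 4$ only $T=\emptyset$ or $T=\{1,\ldots,n-1\}$ is possible. The first case gives $U\subset\bigcap_i\ker(\mu_F(\sigma_i)-I)=\{0\}$, a contradiction, so $T=\{1,\ldots,n-1\}$ and $U\supset W$. If $g_n(z)\ne 0$ then $W=\mathbb{C}^{n-1}$, forcing $U=\mathbb{C}^{n-1}$ and proving irreducibility.

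The main obstacle I anticipate is the explicit determinant identity $D_n(z)=(-1)^{n-1}g_n(z)$: while the two-step expansion and induction are conceptually straightforward, carefully tracking signs and keeping the auxiliary recursion for $N_n$ aligned with the recursion for $g_n$ requires some bookkeeping. The combinatorial case analysis on $T$ in the converse direction is routine once the images and kernels of the operators $\mu_F(\sigma_i)-I$ are in hand.
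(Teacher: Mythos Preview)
This statement is quoted in the paper from \cite{55} as background material and is stated there without proof; the present paper contains no argument for it, so there is no proof here to compare your proposal against.

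On the substance of your approach: exploiting the rank-one structure of $\mu_F(\sigma_i)-I$ is the standard and correct strategy for irreducibility of representations generated by pseudo-reflections. Your identification of the image vectors $v_i$ and of the kernels as the coordinate hyperplanes $\{v:a_i=0\}$ matches the matrices in Theorem~\ref{Fre}, the invariance of $W=\operatorname{span}\{v_1,\ldots,v_{n-1}\}$ follows as you indicate, and the determinant identity $D_n(z)=(-1)^{n-1}g_n(z)$ checks out by direct computation for $n=3,4$, so the inductive scheme you sketch is plausible. In the converse direction the propagation on $T$ works cleanly because the coefficient of $e_{i-1}$ in $v_i$ is $1$ for every $i\ge 2$, so the minimum of any nonempty $T$ is forced to be $1$; the $-1$ entries in $v_1$ then fill in $\{3,\ldots,n-1\}$, and $3\in T$ recovers $2\in T$. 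One caveat: this last step uses $n\ge 4$. For $n=3$ and $z=1$ one has $v_1=-3e_1$, and a direct check shows $\operatorname{span}\{e_1\}$ is invariant under both $\mu_F(\sigma_1)$ and $\mu_F(\sigma_2)$ even though $g_3(1)=6\ne 0$; so either the statement carries an implicit hypothesis on $n$, or the matrices recorded in Theorem~\ref{Fre} differ slightly from those in the cited source. You would need to consult \cite{55} directly to resolve this.
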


\vspace*{0.1cm}

\section{Homogeneous $2$-Local Extensions of Homogeneous $2$-Local Representations of $B_n$ to $SM_n$} 

\vspace*{0.1cm}

In \cite{17}, Mikhalchishina classified all homogeneous $2$-local representations of the braid group $B_n$ for all $n\geq 3$. In this section, we classify all homogeneous $2$-local extensions of all homogeneous $2$-local representations of the braid group $B_n$ to the singular braid monoid $SM_n$ for all $n\geq 2$. So, we extend the result of Mikhalchishina to $SM_n$ for all $n\geq 2$.

\vspace*{0.1cm}

First of all, we consider the case $n=2$, which is a special case, since we have just one relation between the generators of $SM_2$.

\vspace*{0.1cm}

\begin{theorem} \label{n2}
Set $n=2$ and let $\Psi: B_2 \rightarrow GL_2(\mathbb{C})$ be a non-trivial homogeneous $2$-local representation of $B_2$. Let $\Psi': SM_2 \rightarrow M_2(\mathbb{C})$ be a non-trivial homogeneous $2$-local extension of $\Psi$ to $SM_2$. Then, $\Psi'$ is equivalent to one of the following seven representations.
\begin{itemize}
\item[(1)]$\Psi'_1: SM_2 \rightarrow M_2(\mathbb{C})$ such that $\Psi'_1(\sigma_1) =\left( \begin{array}{c@{}}
   \begin{matrix}
   		a & 0\\
   		0 & d
   		\end{matrix}
\end{array} \right)\hspace*{0.15cm} \text{and} \hspace*{0.15cm} \Psi'_1(\tau_1) =\left( \begin{array}{c@{}}
   \begin{matrix}
   		x & 0\\
   		0 & t
   		\end{matrix}
\end{array} \right),\\
\text{where} \hspace*{0.15cm} a, d, x, t \in \mathbb{C}, a\neq d, ad\neq 0.$
\item[(2)]$\Psi'_2: SM_2 \rightarrow M_2(\mathbb{C})$ such that $\Psi'_2(\sigma_1) =\left( \begin{array}{c@{}}
   \begin{matrix}
   		a & 0\\
   		c & a
   		\end{matrix}
\end{array} \right)\hspace*{0.15cm} \text{and} \hspace*{0.15cm} \Psi'_2(\tau_1) =\left( \begin{array}{c@{}}
   \begin{matrix}
   		x & 0\\
   		z & x
   		\end{matrix}
\end{array} \right),\\
\text{where} \hspace*{0.15cm} a, c, x, z \in \mathbb{C}, a \neq 0, c\neq 0.$
\item[(3)]$\Psi'_3: SM_2 \rightarrow M_2(\mathbb{C})$ such that $\Psi'_3(\sigma_1) =\left( \begin{array}{c@{}}
   \begin{matrix}
   		a & 0\\
   		c & d
   		\end{matrix}
\end{array} \right)\hspace*{0.15cm} \text{and} \hspace*{0.15cm} \Psi'_3(\tau_1) =\left( \begin{array}{c@{}}
   \begin{matrix}
   		x & 0\\
   		z & t
   		\end{matrix}
\end{array} \right),\\
\text{where} \hspace*{0.15cm} a, c, d, x, z, t \in \mathbb{C}, ad \neq 0, c\neq 0, a\neq d, c(t-x)+z(a-d)=0.$
\item[(4)]$\Psi'_4: SM_2 \rightarrow M_2(\mathbb{C})$ such that $\Psi'_4(\sigma_1) =\left( \begin{array}{c@{}}
   \begin{matrix}
   		a & b\\
   		0 & a
   		\end{matrix}
\end{array} \right)\hspace*{0.15cm} \text{and} \hspace*{0.15cm} \Psi'_4(\tau_1) =\left( \begin{array}{c@{}}
   \begin{matrix}
   		x & y\\
   		0 & x
   		\end{matrix}
\end{array} \right),\\
\text{where} \hspace*{0.15cm} a, b, x, y \in \mathbb{C}, a \neq 0, b \neq 0.$
\item[(5)]$\Psi'_5: SM_2 \rightarrow M_2(\mathbb{C})$ such that $\Psi'_5(\sigma_1) =\left( \begin{array}{c@{}}
   \begin{matrix}
   		a & b\\
   		0 & d
   		\end{matrix}
\end{array} \right)\hspace*{0.15cm} \text{and} \hspace*{0.15cm} \Psi'_5(\tau_1) =\left( \begin{array}{c@{}}
   \begin{matrix}
   		x & y\\
   		0 & t
   		\end{matrix}
\end{array} \right),\\
\text{where} \hspace*{0.15cm} a, b, d, x, y, t \in \mathbb{C}, ad \neq 0, b\neq 0, a\neq d, b(t-x)+y(a-d)=0$.
\item[(6)]$\Psi'_6: SM_2 \rightarrow M_2(\mathbb{C})$ such that $\Psi'_6(\sigma_1) =\left( \begin{array}{c@{}}
   \begin{matrix}
   		a & b\\
   		c & a
   		\end{matrix}
\end{array} \right)\hspace*{0.15cm} \text{and} \hspace*{0.15cm} \Psi'_6(\tau_1) =\left( \begin{array}{c@{}}
   \begin{matrix}
   		x & y\\
   		z & x
   		\end{matrix}
\end{array} \right),\\
\text{where} \hspace*{0.15cm} a, b, c, x, y, z \in \mathbb{C}, a^2-bc\neq 0, c\neq 0 , b\neq 0, bz=cy.$
\item[(7)]$\Psi'_7: SM_2 \rightarrow M_2(\mathbb{C})$ such that $\Psi'_7(\sigma_1) =\left( \begin{array}{c@{}}
   \begin{matrix}
   		a & b\\
   		c & d
   		\end{matrix}
\end{array} \right)\hspace*{0.15cm} \text{and} \hspace*{0.15cm} \Psi'_7(\tau_1) =\left( \begin{array}{c@{}}
   \begin{matrix}
   		x & y\\
   		z & t
   		\end{matrix}
\end{array} \right),\\
\text{where} \hspace*{0.15cm} a, b, c, d, x, y, z, t \in \mathbb{C}, ad-bc \neq 0, b\neq 0, c\neq 0, a\neq d, bz=cy, b(t-x)=y(a-d), c(t-x)=z(a-d).$
\end{itemize}
\noindent Moreover, if $\Psi'(\tau_1)$ is invertible, then $\Psi'$ is a representation of $SB_n$.
\end{theorem}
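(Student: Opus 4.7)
The plan is to exploit the fact that for $n=2$, the monoid $SM_2$ is generated only by $\sigma_1$ and $\tau_1$, and among the defining relations $(1)$--$(7)$ only relation $(5)$, namely $\sigma_1\tau_1=\tau_1\sigma_1$, is non-vacuous; all other relations require indices that do not exist. Consequently, specifying a homogeneous $2$-local extension $\Psi'$ of $\Psi$ is equivalent to prescribing a pair of matrices $(A,B)=(\Psi'(\sigma_1),\Psi'(\tau_1))\in GL_2(\mathbb{C})\times M_2(\mathbb{C})$ satisfying $AB=BA$, the non-triviality assumption serving to exclude the scalar case.

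Writing
\[A=\begin{pmatrix}a & b\\ c & d\end{pmatrix},\qquad B=\begin{pmatrix}x & y\\ z & t\end{pmatrix},\]
the identity $AB-BA=0$ expands into four scalar equations, of which the two diagonal ones coincide, leaving the three independent conditions
\[bz=cy,\qquad b(t-x)+y(a-d)=0,\qquad c(t-x)+z(a-d)=0.\]

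Next I would run a case analysis on the shape of $A$ indexed by the three binary flags $b\overset{?}{=}0$, $c\overset{?}{=}0$, and $a\overset{?}{=}d$. This yields $2^3=8$ possibilities; the configuration $b=c=0$ and $a=d$ produces a scalar $A$ and is excluded by non-triviality, leaving exactly the seven forms $(1)$--$(7)$ listed in the theorem. In each form the three commutation identities collapse to transparent constraints on $B$: for instance, in case $(1)$, where $b=c=0$ and $a\neq d$, the last two equations force $y=z=0$, so $B$ is an arbitrary diagonal; in case $(6)$, where $a=d$ and $bc\neq 0$, the last two equations give $t=x$ while the first gives $bz=cy$; and in the generic case $(7)$ all three constraints are non-vacuous and reproduce the listed equations. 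The invertibility condition $\det A\neq 0$ translates directly to the non-vanishing hypotheses stated in each case.

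For the final assertion, suppose $B=\Psi'(\tau_1)$ is invertible. In $SB_2$ one has the additional generator $\tau_1^{-1}$ satisfying $\tau_1\tau_1^{-1}=\tau_1^{-1}\tau_1=1$, and the only further relation is $\sigma_1\tau_1^{-1}=\tau_1^{-1}\sigma_1$, which follows automatically from $AB=BA$ upon multiplying on both sides by $B^{-1}$. Hence setting $\Psi'(\tau_1^{-1})=B^{-1}$ gives a well-defined representation of $SB_2$ extending $\Psi'$. The main obstacle is purely organizational: the seven cases look numerous, but the binary-flag partition above forces the split, and within each case the verification reduces to routine substitution into the three commutation equations derived above.
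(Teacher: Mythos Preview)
Your proposal is correct and follows essentially the same approach as the paper: both reduce to the single relation $\sigma_1\tau_1=\tau_1\sigma_1$, derive the same three scalar commutation equations, and perform the identical $2^3$-case split on the flags $b\overset{?}{=}0$, $c\overset{?}{=}0$, $a\overset{?}{=}d$, discarding the scalar case. You additionally spell out the justification for the $SB_2$ assertion, which the paper's proof leaves implicit.
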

\begin{proof}
Set $\Psi'(\sigma_1) =\left( \begin{array}{c@{}}
   \begin{matrix}
   		a & b\\
   		c & d
   		\end{matrix}
\end{array} \right)\hspace*{0.2cm} \text{and} \hspace*{0.2cm} \Psi'(\tau_1) =\left( \begin{array}{c@{}}
   \begin{matrix}
   		x & y\\
   		z & t
   		\end{matrix}
\end{array} \right), \hspace*{0.15cm} \text{such that} \hspace*{0.15cm} a, b, c, d, x, y, z, t \in \mathbb{C}, ad-bc \neq 0.$ The only relation between the generators of $SM_2$ is $\sigma_1\tau_1=\tau_1\sigma_1$. Applying this relation, we get the following three equations.
\begin{equation} \label{eqn1}
bz-cy=0
\end{equation} 
\begin{equation} \label{eqn2}
b(t-x)+y(a-d)=0
\end{equation} 
\begin{equation} \label{eqn3}
c(t-x)+z(a-d)=0
\end{equation} 
We consider and discuss eight cases in the following.
\begin{itemize}
\item[(0)] $b=0, c=0, a=d$: This case is the trivial case, which is rejected since $\Psi'$ is assumed to be non-trivial.
\vspace*{0.1cm}
\item[(1)] $b=0, c=0, a\neq d$: Using the equations (\ref{eqn2}) and (\ref{eqn3}) we get that $y=z=0$, and so we get that $\Psi'$ is equivalent to $\Psi'_1$, as required.
\vspace*{0.1cm}
\item[(2)] $b=0, c\neq 0, a=d$: Using the equations (\ref{eqn1}) and (\ref{eqn3}) we get that $y=0$ and $t=x$, and so we get that $\Psi'$ is equivalent to $\Psi'_2$, as required.
\vspace*{0.1cm}
\item[(3)] $b=0, c\neq 0, a\neq d$: Using the equations (\ref{eqn1}) and (\ref{eqn3}) we get that $y=0$ and $c(t-x)+z(a-d)=0$, and so get that $\Psi'$ is equivalent to $\Psi'_3$, as required.
\vspace*{0.1cm}
\item[(4)] $b\neq 0, c=0, a=d$: Using the equations (\ref{eqn1}) and (\ref{eqn2}) we get that $z=0$ and $t=x$, and so we get that $\Psi'$ is equivalent to $\Psi'_4$, as required.
\vspace*{0.1cm}
\item[(5)] $b\neq 0, c=0, a\neq d$: Using the equations (\ref{eqn1}) and (\ref{eqn2}) we get that $z=0$ and $b(t-x)+y(a-d)=0$, and so we get that $\Psi'$ is equivalent to $\Psi'_5$, as required.
\vspace*{0.1cm}
\item[(6)] $b\neq 0, c\neq 0, a=d$: Using the equations (\ref{eqn1}) and (\ref{eqn2}), we get that $bz=cy$ and $t=x$, and so we get that $\Psi'$ is equivalent to $\Psi'_6$, as required.
\vspace*{0.1cm}
\item[(7)] $b\neq 0, c\neq 0, a\neq d$: Using the equations (\ref{eqn1}), (\ref{eqn2}) and (\ref{eqn3}) we get that $\Psi'$ is equivalent to $\Psi'_7$, as required.
\end{itemize}
\end{proof}

Now, we consider the case $n\geq 3$. In the following theorem, we set a previous result done by Mikhalchishina.

\begin{theorem} \cite{17} \label{ThmM}
Consider $n\geq 3$ and let $\rho: B_n \rightarrow GL_n(\mathbb{C})$ be a non-trivial homogeneous $2$-local representation of $B_n$. Then, $\rho$ is equivalent to one of the following three representations.
\begin{itemize}
\item[(1)] $\rho_1: B_n \rightarrow GL_n(\mathbb{C}) \hspace*{0.15cm} \text{such that } 
\rho_1(\sigma_i) =\left( \begin{array}{c|@{}c|c@{}}
   \begin{matrix}
     I_{i-1} 
   \end{matrix} 
      & 0 & 0 \\
      \hline
    0 &\hspace{0.2cm} \begin{matrix}
   		a & \frac{1-a}{c}\\
   		c & 0
   		\end{matrix}  & 0  \\
\hline
0 & 0 & I_{n-i-1}
\end{array} \right), \\ \text{ where } c \neq 0, a\neq 1, \hspace*{0.15cm} \text{for} \hspace*{0.2cm} 1\leq i\leq n-1.$
\item[(2)] $\rho_2: B_n \rightarrow GL_n(\mathbb{C}) \hspace*{0.15cm} \text{such that }
\rho_2(\sigma_i) =\left( \begin{array}{c|@{}c|c@{}}
   \begin{matrix}
     I_{i-1} 
   \end{matrix} 
      & 0 & 0 \\
      \hline
    0 &\hspace{0.2cm} \begin{matrix}
   		0 & \frac{1-d}{c}\\
   		c & d
   		\end{matrix}  & 0  \\
\hline
0 & 0 & I_{n-i-1}
\end{array} \right),\\ \text{ where } c\neq 0, d\neq 1 \hspace*{0.2cm} \text{for} \hspace*{0.2cm} 1\leq i\leq n-1.$
\item[(3)] $\rho_3: B_n \rightarrow GL_n(\mathbb{C}) \hspace*{0.15cm} \text{such that } 
\rho_3(\sigma_i) =\left( \begin{array}{c|@{}c|c@{}}
   \begin{matrix}
     I_{i-1} 
   \end{matrix} 
      & 0 & 0 \\
      \hline
    0 &\hspace{0.2cm} \begin{matrix}
   		0 & b\\
   		c & 0
   		\end{matrix}  & 0  \\
\hline
0 & 0 & I_{n-i-1}
\end{array} \right),\\ \text{ where } bc\neq 0 \hspace*{0.2cm} \text{for} \hspace*{0.2cm} 1\leq i\leq n-1.$
\end{itemize}
\end{theorem}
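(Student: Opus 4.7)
The plan is to push every braid-group relation through the common $2\times 2$ block. Write $M=\begin{pmatrix} a & b \\ c & d \end{pmatrix}$ for the shared block, with $ad-bc\neq 0$. I would first observe that the far-commutativity relation $\sigma_i\sigma_j=\sigma_j\sigma_i$ for $|i-j|\geq 2$ is automatically satisfied, because in that case the non-identity $2\times 2$ blocks of $\rho(\sigma_i)$ and $\rho(\sigma_j)$ sit in disjoint rows and columns. Hence the only content of the presentation is the braid relation $\sigma_i\sigma_{i+1}\sigma_i=\sigma_{i+1}\sigma_i\sigma_{i+1}$, and by homogeneity this identity is the same for every $i$; it is enough to verify it in the $3\times 3$ principal block indexed by $\{i,i+1,i+2\}$.

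Next I would compute $\rho(\sigma_1)\rho(\sigma_2)\rho(\sigma_1)$ and $\rho(\sigma_2)\rho(\sigma_1)\rho(\sigma_2)$ as $3\times 3$ matrices and equate entries. After cancellations, the essential constraints reduce to
\begin{equation*}
abd=0,\quad acd=0,\quad ad(a-d)=0,\quad a(a+bc-1)=0,\quad d(d+bc-1)=0,
\end{equation*}
the remaining $(1,3)$ and $(3,1)$ entries being automatic tautologies.

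The classification then follows by splitting on whether $a$ and $d$ vanish. If $a\neq 0$ and $d\neq 0$, the first two equations force $b=c=0$, the third forces $a=d$, and the fourth then forces $a=1$; this gives $M=I$, which is the trivial representation and is excluded. In the branch $a\neq 0,\ d=0$, the first three equations hold automatically and the fourth gives $bc=1-a$; invertibility $\det M=-bc\neq 0$ rewrites to $a\neq 1$, and $b,c$ are then both forced nonzero, so $M$ has the shape of $\rho_1$ with $b=(1-a)/c$. The symmetric branch $a=0,\ d\neq 0$ yields $\rho_2$ with $b=(1-d)/c$ and $d\neq 1$. Finally, the branch $a=d=0$ leaves only the condition $bc\neq 0$ from invertibility, giving $\rho_3$.

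The main obstacle, though modest, is the bookkeeping of the case split: one must verify that the four branches exhaust every solution of the polynomial system, that no additional parameter relations are overlooked when more than one factor of a product vanishes, and that in each branch the invertibility condition $\det M=ad-bc\neq 0$ is correctly translated into the extracted inequality ($a\neq 1$, $d\neq 1$, or $bc\neq 0$). The matrix computations themselves are short, and the converse direction (that each of $\rho_1,\rho_2,\rho_3$ does satisfy the braid relation) is obtained simply by substituting back into the $3\times 3$ identity.
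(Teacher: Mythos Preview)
Your argument is correct and complete. The paper does not prove this statement itself---it is quoted from Mikhalchishina~\cite{17} and used as input for the later results---but your method (observe that far-commutativity is automatic for $2$-local blocks, expand the braid relation in the $3\times 3$ principal block, extract the polynomial constraints $abd=acd=ad(a-d)=a(a+bc-1)=d(d+bc-1)=0$, then case-split on which of $a,d$ vanish) is exactly the approach the paper itself takes for its own $3$-local analogue in Theorem~\ref{repBn}, so in spirit and technique the proofs match.
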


\vspace*{0.1cm}

We can easily see that for all $n\geq 3$, the complex specialization of the Burau representation defined in Definition \ref{defBurau} is a special representation of the first family of representation in Theorem \ref{ThmM}. That is, $\rho_B=\rho_1$ in the case $a=1-t$ and $c=1$.

\vspace*{0.1cm}

In the following theorem, we extend the result of Theorem \ref{ThmM} on $B_n$ to the singular braid monoid $SM_n$.

\vspace*{0.1cm}

\begin{theorem} \label{n22}
Consider $n\geq 3$ and let $\rho: B_n \rightarrow GL_n(\mathbb{C})$ be a non-trivial homogeneous $2$-local representation of $B_n$. Let $\rho': SM_n \rightarrow M_n(\mathbb{C})$ be a non-trivial homogeneous $2$-local extension of $\rho$ to $SM_n$. Then, $\rho'$ is equivalent to one of the following three representations. 
\begin{itemize}
\item[(1)] $\rho'_1: SM_n \rightarrow M_n(\mathbb{C}) \hspace*{0.15cm} \text{such that}\\ \hspace*{0.15cm} \rho'_1(\sigma_i) =\rho_1(\sigma_i) \hspace*{0.15cm} \text{and} \hspace*{0.15cm} \rho'_1(\tau_i) =\left( \begin{array}{c|@{}c|c@{}}
   \begin{matrix}
     I_{i-1} 
   \end{matrix} 
      & 0 & 0 \\
      \hline
    0 &\hspace{0.2cm} \begin{matrix}
   		 1-(1-a)(1-t) & \frac{1-a}{c}(1-t)\\
   		c(1-t) & t
   		\end{matrix}  & 0  \\
\hline
0 & 0 & I_{n-i-1}
\end{array} \right), \\
\text{where} \hspace*{0.15cm} a, c, t \in \mathbb{C},a\neq 0, c\neq 0, \hspace*{0.15cm} \text{for} \hspace*{0.15cm} 1\leq i\leq n-1.$
\item[(2)] $\rho'_2:SM_n \rightarrow M_n(\mathbb{C}) \hspace*{0.15cm} \text{such that}\\ \hspace*{0.15cm} \rho'_2(\sigma_i)=\rho_2(\sigma_i) \hspace*{0.15cm} \text{and} \hspace*{0.15cm} \rho'_2(\tau_i) =\left( \begin{array}{c|@{}c|c@{}}
   \begin{matrix}
     I_{i-1} 
   \end{matrix} 
      & 0 & 0 \\
      \hline
    0 &\hspace{0.2cm} \begin{matrix}
   		x & \frac{1-d}{c}(1-x)\\
   		c(1-x) & 1-(1-d)(1-x)
   		\end{matrix}  & 0  \\   		
\hline
0 & 0 & I_{n-i-1}
\end{array} \right), \\
\text{where} \hspace*{0.15cm} c, d, x \in \mathbb{C}, c\neq 0, d\neq 0, \hspace*{0.15cm} \text{for} \hspace*{0.15cm} 1\leq i\leq n-1.$ 
\item[(3)] $\rho'_3:SM_n \rightarrow M_n(\mathbb{C}) \hspace*{0.15cm} \text{such that}\\ \hspace*{0.15cm} \rho'_3(\sigma_i)=\rho_3(\sigma_i) \hspace*{0.15cm}  \text{and} \hspace*{0.15cm} \rho'_3(\tau_i) =\left( \begin{array}{c|@{}c|c@{}}
   \begin{matrix}
     I_{i-1} 
   \end{matrix} 
      & 0 & 0 \\
      \hline
    0 &\hspace{0.2cm} \begin{matrix}
   		x & y\\
   		\frac{cy}{b} & x
   		\end{matrix}  & 0  \\
\hline
0 & 0 & I_{n-i-1}
\end{array} \right), \\
\text{where} \hspace*{0.15cm} b, c, x, y \in \mathbb{C}, b\neq 0, c\neq 0, \hspace*{0.15cm} \text{for} \hspace*{0.15cm} 1\leq i\leq n-1.$ 
\end{itemize}
\noindent Moreover, if $\rho'(\tau_i)$ is invertible for all $1\leq i\leq n-1$, then $\rho'$ is a representation of $SB_n$.
\end{theorem}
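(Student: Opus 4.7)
The plan is to follow the strategy of Theorem \ref{n2}, but additionally impose the mixed relations (6) and (7), which only become nontrivial for $n\geq 3$. Write $M=\begin{pmatrix}a&b\\ c&d\end{pmatrix}$ for the middle block of $\rho(\sigma_i)$ (independent of $i$ by Theorem \ref{ThmM}) and $N=\begin{pmatrix}x&y\\ z&t\end{pmatrix}$ for that of $\rho'(\tau_i)$. Because the $2\times 2$ blocks of $\rho(\sigma_i),\rho(\sigma_j),\rho'(\tau_i),\rho'(\tau_j)$ occupy disjoint diagonal positions when $|i-j|\geq 2$, relations (3) and (4) are automatic. Relation (5) produces the same three equations $bz=cy$, $b(t-x)+y(a-d)=0$, $c(t-x)+z(a-d)=0$ that controlled the $n=2$ case.

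The genuinely new information comes from (6) and (7). I restrict both to the $3\times 3$ block on strands $i,i+1,i+2$ and expand; after discarding trivial identities and equations forced by (5), what remains from (6) are $a(x+bz-1)=0$, $a(y+bt-b)=0$, $d(b(1-x)-y)=0$, $d(1-zb-t)=0$, while (7) yields the mirror set $a(1-x-yc)=0$, $a(c-z-tc)=0$, $d(cx+z-c)=0$, $d(cy+t-1)=0$.

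Specialising $M$ to the three families of Theorem \ref{ThmM} is then a case split. For $\rho_1$ one has $d=0$ and $b=(1-a)/c$; the hypothesis $a\neq 0$ (imposed in the statement to keep family (1) disjoint from family (3)) reduces the surviving equations to $bz=1-x$, $y=b(1-t)$, $z=c(1-t)$, $yc=1-x$, whose unique solution is the claimed $\rho'_1(\tau_i)$, parametrised by $t$. For $\rho_2$, where $a=0$ and $b=(1-d)/c$, the hypothesis $d\neq 0$ yields by the symmetric argument the claimed $\rho'_2(\tau_i)$, parametrised by $x$. For $\rho_3$, where $a=d=0$, both (6) and (7) are vacuous, so only (5) remains and forces $t=x$ and $z=cy/b$, exactly $\rho'_3$. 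The degenerate corners $a=0$ in family (1) and $d=0$ in family (2) are absorbed into family (3), which explains the exclusions in the statement. A direct substitution confirms that each $\rho'_k$ satisfies all of (3)--(7).

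The main labour is the expansion of (6) and (7) as $3\times 3$ block identities and the bookkeeping of which entries produce new constraints versus those identically satisfied or already implied by (5); once that is done, the case analysis is routine. The last sentence of the statement is immediate, since $SB_n$ is obtained from $SM_n$ by formally adjoining $\tau_i^{-1}$, so invertibility of each $\rho'(\tau_i)$ is the only extra condition required for the representation to descend to $SB_n$.
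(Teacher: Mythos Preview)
Your proof is correct and follows essentially the same approach as the paper's: set up the $2\times2$ blocks $M,N$, observe that the far-commutation relations are automatic, extract equations from relation (5) and from the $3\times3$ block expansions of relations (6) and (7), and then case-split according to Theorem~\ref{ThmM}. One small imprecision: the entries $(2,1)$ and $(2,2)$ of the relation-(6) block identity yield $adz=0$ and $ad(t-x)=0$ (and symmetrically $ady=0$ from (7)), which you say are ``forced by (5)''. They are not consequences of the three commutation equations from (5) alone; rather, they are trivially satisfied because every $M$ in Theorem~\ref{ThmM} has $ad=0$. Since you immediately pass to that case split anyway, nothing is lost, but the phrasing should be adjusted. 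Your remark that the degenerate boundaries $a=0$ of family~(1) and $d=0$ of family~(2) are absorbed into family~(3) is a point the paper's proof leaves implicit, and it is worth keeping.
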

\begin{proof}
Set $\rho'(\sigma_i) =\left( \begin{array}{c|@{}c|c@{}}
   \begin{matrix}
     I_{i-1} 
   \end{matrix} 
      & 0 & 0 \\
      \hline
    0 &\hspace{0.2cm} \begin{matrix}
   		a & b\\
   		c & d
   		\end{matrix}  & 0  \\
\hline
0 & 0 & I_{n-i-1}
\end{array} \right) \hspace*{0.15cm} \text{and} \hspace*{0.15cm} \rho'(\tau_i) =\left( \begin{array}{c|@{}c|c@{}}
   \begin{matrix}
     I_{i-1} 
   \end{matrix} 
      & 0 & 0 \\
      \hline
    0 &\hspace{0.2cm} \begin{matrix}
   		x & y\\
   		z & t
   		\end{matrix}  & 0  \\
\hline
0 & 0 & I_{n-i-1}
\end{array} \right), \\
\text{where} \hspace*{0.15cm} a, b, c, d, x, y, z, t \in \mathbb{C}, ad-bc \neq 0, \hspace*{0.15cm} \text{for} \hspace*{0.15cm} 1\leq i\leq n-1.$

\vspace*{0.1cm}

Direct computations show that enough to use three mixed relations, and the other relations implies similar equations. The three mixed relations are: $\tau_1\sigma_1=\sigma_1\tau_1$, $\sigma_1\sigma_2\tau_1=\tau_2\sigma_1\sigma_2$ and $\sigma_2\sigma_1\tau_2=\tau_1\sigma_2\sigma_1$. We consider each relation and write the obtained equations in the following.
\begin{itemize}
\item[(i)] Using the relation $\tau_1\sigma_1=\sigma_1\tau_1$, we obtain the following equations.
\begin{equation} \label{eq4}
bz-cy=0
\end{equation} 
\begin{equation} \label{eq5}
b(t-x)+y(a-d)=0
\end{equation} 
\begin{equation} \label{eq6}
c(t-x)+z(a-d)=0
\end{equation} 
\item[(ii)] Using the relation $\sigma_1\sigma_2\tau_1=\tau_2\sigma_1\sigma_2$, we obtain the following equations.
\begin{equation} \label{eq7}
a(-1+x+bz)=0
\end{equation} 
\begin{equation} \label{eq8}
a(b(t-1)+y)=0
\end{equation} 
\begin{equation} \label{eq9}
adz=0
\end{equation} 
\begin{equation} \label{eq10}
ad(t-x)=0
\end{equation}
\begin{equation} \label{eq11}
d(b(-1+x)+y)=0
\end{equation}  
\begin{equation}\label{eq12}
d(-1+t+bz)=0
\end{equation} 
\item[(iii)] Using the relation $\sigma_2\sigma_1\tau_2=\tau_1\sigma_2\sigma_1$, we obtain the following equations.
\begin{equation} \label{eq13}
a(-1+x+cy)=0
\end{equation} 
\begin{equation}\label{eq14}
a(c(t-1)+z)=0
\end{equation} 
\begin{equation}\label{eq15}
ady=0
\end{equation} 
\begin{equation}\label{eq16}
ad(t-x)=0
\end{equation}
\begin{equation}\label{eq17}
d(c(-1+x)+z)=0
\end{equation}  
\begin{equation}\label{eq18}
d(-1+t+cy)=0
\end{equation} 
\end{itemize}

\vspace*{0.1cm}

By Theorem \ref{ThmM}, we can see that $\rho$ is one of the three representations $\rho_1, \rho_2,$ and $\rho_3$ mentioned in the theorem. We consider each case separately.
\begin{itemize}
\item[(1)] Suppose $\rho=\rho_1$. Then we have $a \neq 0, c\neq 0, b=\frac{1-a}{c}, d=0$. Using the equations (\ref{eq8}), (\ref{eq13}), and (\ref{eq14}) we get that , $y=b(1-t), x=1-cy, z=c(1-t)$. So, we have $ y=\frac{1-a}{c}(1-t), x=1-(1-a)(1-t), z=c(1-t)$, and therefore we get the required result. 
\vspace*{0.1cm}
\item[(2)] Suppose $\rho=\rho_2$. Then $a=0, c\neq 0, d\neq 0, b=\frac{1-d}{c}$. Using the equations (\ref{eq11}), (\ref{eq17}), and (\ref{eq18}) we get that , $y=b(1-x), z=c(1-x), t=1-cy$. So, we have $ y=\frac{1-d}{c}(1-x), z=c(1-x), t=1-(1-d)(1-x)$, and therefore we get the required result.
\vspace*{0.1cm}
\item[(3)] Suppose $\rho=\rho_3$. Then $a=d=0, bc\neq 0 $. Using the equations (\ref{eq4}) and (\ref{eq5}), we get that $bz-cy=0$ and $t=x$. So, $z=\frac{cy}{b}$, and therefore we get the required result. 
\end{itemize}
\end{proof}

Notice that as the Burau representation is shown to be reducible in Theorem \ref{bure}, it has been proven in \cite{102} that the homogeneous $2$-local extension of the Burau representation of $B_n$ to $SM_n$ is also reducible. This lead us to ask the following question.

\vspace*{0.1cm}

\begin{question} \label{qq}
Can we find a homogeneous $2$-local representation of $B_n$ which is reducible and its homogeneous $2$-local extension to $SM_n$ is irreducible?
\end{question}

\vspace*{0.1cm}

In the next theorem, we classify all $\Phi$-type extensions of all non-trivial homogeneous $2$-local representations of $B_n$ to $SM_n$ for all $n\geq 3$.

\begin{theorem} \label{88}
Consider $n\geq 3$ and let $\rho: B_n \rightarrow GL_n(\mathbb{C})$ be a non-trivial homogeneous $2$-local representation of $B_n$. Let $\hat{\rho}: SM_n \rightarrow M_n(\mathbb{C})$ be a $\Phi$-type extension of $\rho$ to $SM_n$. Then, $\hat{\rho}$ is equivalent to one of the following three representations.
\begin{itemize}
\item[(1)] $\hat{\rho}_1: SM_n \rightarrow M_n(\mathbb{C}) \hspace*{0.15cm} \text{such that }
\hat{\rho}_1(\sigma_i) =\rho_1(\sigma_i)$ and \\ $\hat{\rho}_1(\tau_i) =\left( \begin{array}{c|@{}c|c@{}}
   \begin{matrix}
     (u+v+w)I_{i-1} 
   \end{matrix} 
      & 0 & 0 \\
      \hline
    0 &\hspace{0.2cm} \begin{matrix}
   		ua+w & \frac{u(1-a)+v}{c}\\
   		uc+\frac{vc}{1-a} & \frac{-va}{1-a}+w 
   		\end{matrix}  & 0  \\
\hline
0 & 0 & (u+v+w)I_{n-i-1}
\end{array} \right),\\ \text{ where } a, c, u, v, w \in \mathbb{C}, a\neq 1, c \neq 0,  \hspace*{0.15cm} \text{for} \hspace*{0.2cm} 1\leq i\leq n-1$.\\
\item[(2)] $\hat{\rho}_2: SM_n \rightarrow M_n(\mathbb{C}) \hspace*{0.15cm} \text{such that } 
\hat{\rho}_2(\sigma_i)=\rho_2(\sigma_i)$ and \\ ${\rho}_2(\tau_i) =\left( \begin{array}{c|@{}c|c@{}}
   \begin{matrix}
     (u+v+w)I_{i-1} 
   \end{matrix} 
      & 0 & 0 \\
      \hline
    0 &\hspace{0.2cm} \begin{matrix}
 		\frac{-vd}{1-d}+w & \frac{u(1-d)+v}{c}\\
   		uc+\frac{vc}{1-d} & ud+w 
   		\end{matrix}  & 0  \\
\hline
0 & 0 & (u+v+w)I_{n-i-1}
\end{array} \right), \\ \text{ where } c, d, u, v, w \in \mathbb{C}, d\neq 1, c \neq 0, \hspace*{0.15cm} \text{for} \hspace*{0.15cm} 1\leq i\leq n-1$.\\
\item[(3)] $\hat{\rho}_3: SM_n \rightarrow M_n(\mathbb{C}) \hspace*{0.15cm} \text{such that }
\hat{\rho}_3(\sigma_i) =\rho_3(\sigma_i)$ and \\ $\hat{\rho}_3(\tau_i) =\left( \begin{array}{c|@{}c|c@{}}
   \begin{matrix}
     (u+v+w)I_{i-1} 
   \end{matrix} 
      & 0 & 0 \\
      \hline
    0 &\hspace{0.2cm} \begin{matrix}
   		w & ub+\frac{v}{c}\\
   		uc+\frac{v}{b} & w \\ 
   		\end{matrix}  & 0  \\
\hline
0 & 0 & (u+v+w)I_{n-i-1}
\end{array} \right), \\ \text{ where } b, c, u, v, w \in \mathbb{C}, b\neq 0, c \neq 0, \hspace*{0.15cm} \text{for} \hspace*{0.2cm} 1\leq i\leq n-1$.\\
\end{itemize}
\end{theorem}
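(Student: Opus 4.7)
The plan is to invoke Theorem~\ref{ThmM} to reduce $\rho$ to one of the three normal forms $\rho_1,\rho_2,\rho_3$, and then simply compute the $\Phi$-type extension in each case. To avoid collision with the matrix entries $a,b,c,d$, I denote the scalars in the $\Phi$-type definition by $u,v,w\in\mathbb{C}$, so that $\hat{\rho}(\sigma_i)=\rho(\sigma_i)$ and $\hat{\rho}(\tau_i)=u\rho(\sigma_i)+v\rho(\sigma_i^{-1})+wI_n$. If $\rho=P\rho_k P^{-1}$ for some invertible $P$, then
\[
\hat{\rho}(\tau_i) \;=\; P\bigl(u\rho_k(\sigma_i)+v\rho_k(\sigma_i^{-1})+wI_n\bigr)P^{-1},
\]
so it suffices to prove the claim under the assumption $\rho=\rho_k$, and the general equivalence statement follows by conjugating through by $P$.

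Because $\rho$ is $2$-local, $\rho(\sigma_i)$ is block diagonal with flanking identity blocks $I_{i-1}$ and $I_{n-i-1}$ and a central $2\times 2$ block $M_i$. Consequently $\rho(\sigma_i^{-1})$ has the same flanks with central block $M_i^{-1}$, and $I_n$ contributes an identity everywhere. Summing these three matrices, the flanks of $\hat{\rho}(\tau_i)$ become $(u+v+w)I$, matching the statement, while the central $2\times 2$ block is
\[
N_i \;=\; uM_i + vM_i^{-1} + wI_2.
\]
Hence the block-diagonal shape of $\hat{\rho}_1,\hat{\rho}_2,\hat{\rho}_3$ is forced automatically; only the central blocks remain to be identified.

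For each of the three cases I would apply the standard $2\times 2$ inverse formula. In case (1), $M_i=\bigl(\begin{smallmatrix} a & (1-a)/c \\ c & 0\end{smallmatrix}\bigr)$ has determinant $a-1\neq 0$ (since $a\neq 1$), giving
\[
M_i^{-1} \;=\; \begin{pmatrix} 0 & 1/c \\ c/(1-a) & -a/(1-a) \end{pmatrix};
\]
substituting into $N_i$ reproduces the central block of $\hat{\rho}_1$ entry by entry. The analogous short calculations in cases (2) and (3), using $\det M_i = d-1$ and $\det M_i = -bc$ respectively, yield the central blocks of $\hat{\rho}_2$ and $\hat{\rho}_3$.

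The key point is that no braid or mixed relations need to be checked: the proposition cited from~\cite{13} already guarantees that $\Phi_{u,v,w}$ is a representation of $SM_n$ whenever $\rho$ is a representation of $B_n$. Consequently the classification reduces entirely to the mechanical inversions above, and the nondegeneracy hypotheses $a\neq 1$, $d\neq 1$, and $bc\neq 0$ from Theorem~\ref{ThmM} are precisely what is needed for $M_i^{-1}$ to exist and for the stated formulas to be well-defined. I do not expect any genuine obstacle; the argument is a normal-form reduction followed by three closed-form $2\times 2$ inversions.
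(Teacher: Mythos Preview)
Your proposal is correct and follows essentially the same approach as the paper: reduce to the three normal forms of Theorem~\ref{ThmM} and then compute $u\rho(\sigma_i)+v\rho(\sigma_i)^{-1}+wI_n$ directly, obtaining the $(u+v+w)I$ flanks and the stated central $2\times 2$ blocks. The only cosmetic difference is that the paper writes the general inverse formula $\frac{1}{ad-bc}\bigl(\begin{smallmatrix} d & -b\\ -c & a\end{smallmatrix}\bigr)$ once and then specializes, while you specialize first and invert in each case; both are the same computation.
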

\begin{proof}
Set $\rho(\sigma_i) =\left( \begin{array}{c|@{}c|c@{}}
   \begin{matrix}
     I_{i-1} 
   \end{matrix} 
      & 0 & 0 \\
      \hline
    0 &\hspace{0.2cm} \begin{matrix}
   		a & b\\
   		c & d
   		\end{matrix}  & 0  \\
\hline
0 & 0 & I_{n-i-1}
\end{array} \right),$ where $ad-bc\neq 0$. We have that $\hat{\rho}(\tau_i)=u\rho(\sigma_i)+v\rho^{-1}(\sigma_i)+wI_n$, where $u,v,w \in \mathbb{C}$. Direct computations give that 
$$\hat{\rho}(\tau_i) =\left( \begin{array}{c|@{}c|c@{}}
   \begin{matrix}
     (u+v+w) I_{i-1} 
   \end{matrix} 
      & 0 & 0 \\
      \hline
    0 &\hspace{0.2cm} \begin{matrix}
   		ua+\frac{vd}{ad-bc}+w & ub-\frac{vb}{ad-bc}\\
   		uc-\frac{vc}{ad-bc} & ud+\frac{va}{ad-bc}+w
   		\end{matrix}  & 0  \\
\hline
0 & 0 & (u+v+w)I_{n-i-1}
\end{array} \right).$$ 
By considering each case separately in Theorem \ref{ThmM} we get the required result.
\end{proof}

Gemein proved in \cite{10} that any homogeneous $2$-local representation $\hat{\rho}_B: SM_n \rightarrow M_n(\mathbb{Z}[t^{\pm 1}])$ that is an extension of the Burau representation $\rho_B$ of $B_n$ can be defined on the generators of $SM_n$ as follows

\vspace*{0.2cm}

$$\hat{\rho}_B(\sigma_i)=  \left( \begin{array}{c|@{}c|c@{}}
   \begin{matrix}
     I_{i-1} 
   \end{matrix} 
      & 0 & 0 \\
      \hline
    0 &\hspace{0.2cm} \begin{matrix}
   	1-t & t\\
   	1 & 0\\
\end{matrix}  & 0  \\
\hline
0 & 0 & I_{n-i-1}
\end{array} \right) \hspace*{0.2cm} \text{for} \hspace*{0.2cm} i=1,2,\ldots, n-1,$$

and

$$\hat{\rho}_B(\tau_i)=  \left( \begin{array}{c|@{}c|c@{}}
   \begin{matrix}
     I_{i-1} 
   \end{matrix} 
      & 0 & 0 \\
      \hline
    0 &\hspace{0.2cm} \begin{matrix}
   	1-t+at & t-at\\
   	1-a & a\\
\end{matrix}  & 0  \\
\hline
0 & 0 & I_{n-i-1}
\end{array} \right) \hspace*{0.2cm} \text{for} \hspace*{0.2cm} i=1,2,\ldots, n-1, \hspace*{0.2cm} \text{where} \hspace*{0.2cm} a\in \mathbb{C}.$$

\vspace*{0.1cm}

In \cite{13}, Bardakov, Chbili, and Kozlovskaya stated that the extension of the Burau representation, $\hat{\rho}_B$, to $SM_n$ can be given as follows.
$$\hat{\rho}_B(\sigma_i)=\rho_B(\sigma_i) \hspace*{0.3cm} \text{and} \hspace*{0.3cm} \hat{\rho}_B(\tau_i)=(1-a)\rho(\sigma_i)+aI_n, $$ wehre $I_n$ is the $n\times n$ identity matrix.

\vspace*{0.1cm}

The results of Gemein and of Bardakov, Chbili, and Kozlovskaya clearly show that any homogeneous $2$-local extension of the Burau representation of $B_n$ to $SM_n$ is of $\Phi$-type. This lead us to ask the following question.

\vspace*{0.1cm}

\begin{question}
If $\rho$ is a homogeneous $k$-local representation of the braid group $B_n$, then what is the relation between $k$-local extensions and $\Phi$-type extensions of $\rho$ to the singular braid monoid $SM_n$?
\end{question}

\vspace*{0.1cm}

\section{Homogeneous $3$-Local Representations of $B_n$ and its Homogeneous $3$-Local Extensions to $SM_n$} 

\vspace*{0.1cm}

In this section, we extend the result of Mikhalchishina and the result in Section 3 in two steps. First, we classify all non-trivial homogeneous $3$-local representations of $B_n$ for all $n\geq 4$. Second, we classify all non-trivial homogeneous $3$-local extensions of these representations to the singular braid monoid $SM_n$ for all $n\geq 4$.

\vspace*{0.1cm}

\begin{theorem}\label{repBn}
For $n\geq 4$, let $\nu :B_n\to GL_{n+1}(\mathbb{C})$ be a non-trivial homogeneous $3$-local representation of $B_n$. Then $\nu$ is equivalent to one of the following eight representations $\nu_j$, $1\leq j\leq 8$, where $\nu_j(\sigma_i)=\left( \begin{array}{c|@{}c|c@{}}
   \begin{matrix}
     I_{i-1} 
   \end{matrix} 
      & 0 & 0 \\
      \hline
    0 &\hspace{0.2cm} \begin{matrix}
   		M_j
   		\end{matrix}  & 0  \\
\hline
0 & 0 & I_{n-i-1}
\end{array} \right),
\text{ for all } 1\leq i \leq n-1 \text{ with } M_j \text{ is given by }$
\begin{enumerate}
\item $M_1=\begin{pmatrix}
     1& 0 & 0\\
     0 & m_{22} & m_{23}\\
     0 & \frac{1-m_{22}}{m_{23}} & 0
   \end{pmatrix} $, where $m_{22}\neq1$ and $m_{23}\neq0.$
   \vspace*{0.1cm}
\item  $M_2=\begin{pmatrix}
     0 &m_{12} & 0\\
     \frac{1-m_{22}}{m_{12}} & m_{22} & 0\\
     0 & 0 & 1
   \end{pmatrix} $, where $m_{22}\neq1$ and $m_{12}\neq0$.
      \vspace*{0.1cm}
\item $M_3=\begin{pmatrix}
     1 &-\frac{m_{22}}{m_{32}} & 0\\
     0 &m_{22} & 0\\
     0 & m_{32} & 1
   \end{pmatrix} $, where $m_{22}m_{32}\neq0$.
      \vspace*{0.1cm}
\item  $M_4=\begin{pmatrix}
     1 & 0 & 0\\
     -\frac{m_{22}}{m_{32}}& m_{22} & m_{23}\\
     0 & 0 & 1
   \end{pmatrix} $, where $m_{23}m_{22}\neq0$.
      \vspace*{0.1cm}
\item $M_5=\begin{pmatrix}
     1 & 0 & 0\\
     0 & 0 & m_{23}\\
     0 & m_{32} & 1-m_{23}m_{32}
   \end{pmatrix} $, where $m_{23}m_{32}\neq0$.
      \vspace*{0.1cm}
\item $M_6=\begin{pmatrix}
     1-m_{12}m_{21} & m_{12} & 0\\
     m_{21} & 0 & 0\\
     0 & 0 & 1
   \end{pmatrix} $, where $m_{12}m_{21}\neq0$.
      \vspace*{0.1cm}
\item  $M_7=\begin{pmatrix}
     1 & 0 & 0\\
     0 & 0 & m_{23}\\
     0 & m_{32} & 0
   \end{pmatrix},$ where $m_{23}m_{32}\neq0.$
         \vspace*{0.1cm}
    \item  $M_8=\begin{pmatrix}
     0 & m_{12} & 0\\
      m_{21} & 0 & 0\\
       0 & 0 & 1
\end{pmatrix},$ where $m_{12}m_{21}\neq0.$
\end{enumerate} 
\end{theorem}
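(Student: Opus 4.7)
The plan is to fix the common $3 \times 3$ block $M = (m_{ij})_{1 \le i, j \le 3}$ (with $\det M \neq 0$, since $\nu(\sigma_i) \in GL_{n+1}(\mathbb{C})$) and extract all constraints on $M$ from the braid group relations. Because the representation is homogeneous and $n \geq 4$, every non-trivial relation reduces, after an index shift, to one of two essential relations: the ``far'' commutation $\nu(\sigma_1)\nu(\sigma_3) = \nu(\sigma_3)\nu(\sigma_1)$, whose blocks overlap only at coordinate $3$, or the braid identity $\nu(\sigma_1)\nu(\sigma_2)\nu(\sigma_1) = \nu(\sigma_2)\nu(\sigma_1)\nu(\sigma_2)$, whose blocks overlap at coordinates $2$ and $3$. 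Relations between $\sigma_i, \sigma_j$ with $|i-j| \ge 3$ (available when $n \ge 5$) have disjoint supports and so impose no further condition.

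First, expanding the commutation $\nu(\sigma_1)\nu(\sigma_3) = \nu(\sigma_3)\nu(\sigma_1)$ at the affected rows and columns $\{1,\ldots,5\}$ yields $m_{13} = m_{31} = 0$ together with the ``ladder'' conditions $(m_{11}-1)m_{23} = (m_{11}-1)m_{32} = 0$ and $(m_{33}-1)m_{12} = (m_{33}-1)m_{21} = 0$, and the ``crossing'' conditions $m_{12}m_{23} = m_{21}m_{32} = 0$. These already split the search into a small number of cases organized by whether each of $m_{11}, m_{33}$ equals $1$ and which of the four ``arms'' $m_{12}, m_{21}, m_{23}, m_{32}$ vanish.

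Next, within each case one expands the braid identity as a matrix equation at the rows and columns $\{1,2,3,4\}$, producing a polynomial system in the surviving entries. In every case the braid equations either pin a further diagonal entry to $0$ or $1$, or impose a single algebraic relation between $m_{22}$ and the non-zero arms. Collecting the outcomes gives exactly the eight families of the theorem: in six of them ($M_1, M_2, M_5, M_6, M_7, M_8$) the block $M$ decouples one of positions $1$ or $3$, so that the non-trivial $2 \times 2$ sub-block is a homogeneous $2$-local representation of one of the three types in Theorem \ref{ThmM}; the remaining two ($M_3$ and $M_4$) are the genuinely $3$-local configurations in which only the middle column (respectively the middle row) carries the non-trivial structure, with the braid identity enforcing the displayed relation among the nonzero entries. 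For the converse, each $M_j$ was constructed to satisfy the derived equations, so a direct substitution shows that it defines a valid homogeneous $3$-local representation of $B_n$.

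The principal obstacle will be the bookkeeping in the braid-identity step: it produces roughly a dozen scalar equations per case, and one must verify that no family is missed and that the invertibility and parameter constraints listed in each family are exactly those required. A minor subtlety is identifying the ``non-triviality'' hypothesis with the exclusion of the single case in which $M$ is a scalar multiple of $I_3$; once this is settled, the listed inequalities such as $m_{22} \neq 1$, $m_{23} \neq 0$, and so on, are precisely those needed to guarantee both $\det M \neq 0$ and that $\nu$ is not trivial.
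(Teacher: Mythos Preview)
Your plan is correct and follows the same core route as the paper: reduce to the commutation $\nu(\sigma_1)\nu(\sigma_3)=\nu(\sigma_3)\nu(\sigma_1)$ to force $m_{13}=m_{31}=0$ together with the six ``ladder/crossing'' equations you wrote, then impose the braid identity $\nu(\sigma_1)\nu(\sigma_2)\nu(\sigma_1)=\nu(\sigma_2)\nu(\sigma_1)\nu(\sigma_2)$ and solve the resulting polynomial system under $\det M\neq 0$. The paper does exactly this, listing the six commutation equations (their (18)--(23), identical to yours) and sixteen braid equations, and then solves the combined system with Mathematica to obtain the nine solutions (the trivial $M=I_3$ plus the eight families).

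The one genuine difference is organizational rather than mathematical. The paper feeds all equations to a computer algebra system at once; you instead propose to stratify first by which of $m_{11},m_{33}$ equal $1$ and which arms $m_{12},m_{21},m_{23},m_{32}$ vanish, and then observe that in six of the eight families the block $M$ is block-diagonal of shape $1\oplus M'$ or $M'\oplus 1$, so that $\nu$ factors as a trivial summand plus a homogeneous $2$-local representation and Theorem~\ref{ThmM} applies directly. This is a cleaner and more conceptual route: it explains \emph{why} families $M_1,M_2,M_5,M_6,M_7,M_8$ arise (they are the three Mikhalchishina types, each placed in the lower-right or upper-left $2\times 2$ corner) and isolates $M_3,M_4$ as the only genuinely new $3$-local shapes requiring a fresh computation. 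The paper's approach, by contrast, treats all eight on equal footing and does not record this structure. One small correction: non-triviality excludes precisely $M=I_3$, not an arbitrary scalar multiple, since the braid relation already forces any diagonal $M$ to have $m_{11}=m_{33}=1$ and then $m_{22}=1$.
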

\begin{proof}
Since $\nu$ is a homogeneous $3$-local representation, it follows that there exists a $3\times 3$ complex matrix  $M=\begin{pmatrix}
     m_{11}&m_{12} & m_{13}\\
     m_{21}&m_{22} & m_{23}\\
     m_{31} & m_{32} & m_{33}
   \end{pmatrix}$
such that 
$\nu(\sigma_i)=\left( \begin{array}{c|@{}c|c@{}}
   \begin{matrix}
     I_{i-1} 
   \end{matrix} 
      & 0 & 0 \\
      \hline
    0 &\hspace{0.2cm} \begin{matrix}
   		M
   		\end{matrix}  & 0  \\
\hline
0 & 0 & I_{n-i-1}
\end{array} \right),$ for all $1\leq i\leq n-1$. Note that $\det(\nu(\sigma_i))=\det(M)$, which gives that $\det(M)\neq 0$.\\

First of all, let's check the relation $\nu(\sigma_i\sigma_j)=\nu(\sigma_j\sigma_i)$ for $|i-j|>1$. Without loss of generality, it suffices to consider two cases: $j>i+2$ and $j=i+2$.\\

For the case $j>i+2$, we have 
$$\nu(\sigma_i\sigma_j)=\nu(\sigma_j\sigma_i)=\left( \begin{array}{c|c|c|c|c}
   \begin{matrix}
     I_{i-1} 
   \end{matrix} 
      & 0 & 0 &0 &0\\
      \hline
    0 & \begin{matrix}
    M 
    \end{matrix} & 0 &0 &0 \\
\hline
0 & 0 & \begin{matrix}
I_{j-i-3}
\end{matrix} &0 &0\\
\hline
0 & 0 & 0 & \begin{matrix}
M
\end{matrix}&0\\
\hline
0 & 0 & 0 & 0&  \begin{matrix}
I_{n-j-1}
\end{matrix}
\end{array} \right).$$

For the case $j=i+2$, direct computations give that
   $$\nu(\sigma_i\sigma_j)=\left( \begin{array}{c|@{}c|c@{}}
   \begin{matrix}
     I_{i-1} 
   \end{matrix} 
      & 0 & 0 \\
      \hline
    0 &\hspace{0.2cm} \begin{matrix}
   		S_1
   		\end{matrix}  & 0  \\
\hline
0 & 0 & I_{n-i-3}
\end{array} \right) 
\text{ and  } 
\nu(\sigma_j\sigma_i)=\left( \begin{array}{c|@{}c|c@{}}
   \begin{matrix}
     I_{i-1} 
   \end{matrix} 
      & 0 & 0 \\
      \hline
    0 &\hspace{0.2cm} \begin{matrix}
   		S_2
   		\end{matrix}  & 0  \\
\hline
0 & 0 & I_{n-i-3}
\end{array} \right), 
$$ where $$S_1=\left(
\begin{array}{ccccc}
 m_{11} & m_{12} & m_{11} m_{13} & m_{12}m_{13} & m_{13}^2 \\
 m_{21} & m_{22} & m_{11}m_{23} & m_{12}m_{23} & m_{13}m_{23} \\
 m_{31} & m_{32} & m_{11}m_{33} & m_{12}m_{33} & m_{13}m_{33} \\
 0 & 0 & m_{21} & m_{22} & m_{23} \\
 0 & 0 & m_{31} & m_{32} & m_{33} \\
\end{array}
\right)$$ and 

$$S_2=\left(
\begin{array}{ccccc}
 m_{11} & m_{12} & m_{13} & 0 & 0 \\
 m_{21} & m_{22} & m_{23} & 0 & 0 \\
 m_{11}m_{31} & m_{11}m_{32} & m_{11}m_{33} & m_{12} & m_{13} \\
 m_{21}m_{31} & m_{21}m_{32} & m_{21}m_{33} & m_{22} & m_{23} \\
 m_{31}^2 & m_{31}m_{32} & m_{31}m_{33} & m_{32} & m_{33} \\
\end{array}
\right).$$\\

Hence, the relation $\sigma_i\sigma_j=\sigma_j\sigma_i$ yields $S_1=S_2$. This implies that $m_{13}=m_{31}=0$. Then we get the following equations.
\begin{equation}\label{B301}
m_{23} - m_{11} m_{23}=0
\end{equation}
\begin{equation}\label{N302}
m_{12} m_{23}=0
\end{equation}
\begin{equation}\label{N303}
-m_{32} + m_{11} m_{32}=0
\end{equation}
\begin{equation}\label{N304}
m_{12} - m_{12} m_{33}=0
\end{equation}
\begin{equation}\label{N305}
m_{21} m_{32}=0
\end{equation}
\begin{equation}\label{N306}
-m_{21} + m_{21} m_{33}=0
\end{equation}

The relation $\sigma_{i}\sigma_{i+1}\sigma_{i}=\sigma_{i+1}\sigma_{i}\sigma_{i+1}$ yields the following.

$$\left( \begin{array}{c|@{}c|c@{}}
   \begin{matrix}
     I_{i-1} 
   \end{matrix} 
      & 0 & 0 \\
      \hline
    0 &\hspace{0.2cm} \begin{matrix}
   		T_1
   		\end{matrix}  & 0  \\
\hline
0 & 0 & I_{n-i-2}
\end{array} \right)=
\left( \begin{array}{c|@{}c|c@{}}
   \begin{matrix}
     I_{i-1} 
   \end{matrix} 
      & 0 & 0 \\
      \hline
    0 &\hspace{0.2cm} \begin{matrix}
   		T_2
   		\end{matrix}  & 0  \\
\hline
0 & 0 & I_{n-i-2}
\end{array} \right), $$
\vspace*{0.2cm}
where $T_1=(t_{11},t_{12},t_{13},t_{14})\text{ and } T_2=(t_{21},t_{22},t_{23},t_{24}),$
with  
$$t_{11}=\left(
\begin{array}{c}
m_{11} (m_{11}+m_{12}m_{21}) \\
 m_{21} (m_{11} +m_{11}m_{22}+m_{21} m_{23}) \\
 m_{21} (m_{11}m_{32}+m_{21}m_{33})\\
 0 \\
\end{array}
\right),$$
\vspace*{0.1cm}
$$ t_{12}=\left(
\begin{array}{c}
m_{12} (m_{11}+m_{11}m_{22}+m_{12} m_{32}) \\
m_{22} (m_{11} m_{22}+m_{23} (m_{21}+m_{32}))+m_{12} (m_{21}+m_{22} m_{32}) \\
m_{11} m_{22} m_{32}+m_{12} m_{32}^2+m_{22} m_{33} (m_{21}+m_{32}) \\
m_{32}^2 \\
\end{array}
\right),$$
\vspace*{0.1cm}
$$t_{13}=\left(
\begin{array}{c}
 m_{12} (m_{11} m_{23}+m_{12} m_{33}) \\
 m_{11} m_{22} m_{23}+m_{22} m_{33} (m_{12}+m_{23})+m_{21} m_{23}^2\\
  m_{11} m_{23} m_{32}+m_{33} (m_{12} m_{32}+m_{21}m_{23}+m_{22} m_{33}) \\
 m_{32} m_{33} \\
\end{array}
\right),$$
\vspace*{0.1cm}
$$t_{14}=\left(
\begin{array}{c}
 0 \\
 m_{23}^2 \\
 m_{23} m_{33} \\
m_{33} \\
\end{array}
\right),$$
\vspace*{0.1cm}
$$
t_{21}=\left(
\begin{array}{c}
 m_{11} \\
 m_{11} m_{21} \\
 m_{21}^2  \\
 0  \\
\end{array}
\right),$$
\vspace*{0.1cm}
$$
t_{22}=\left(
\begin{array}{cc}
 m_{11} m_{12} \\
 m_{11}^2 m_{22}+m_{11} m_{12} m_{32}+m_{11} m_{21} m_{23}+m_{12} m_{21} m_{33} \\
  m_{11} m_{22} (m_{21}+m_{32})+m_{21} (m_{21} m_{23}+m_{22} m_{33}) \\
 m_{32} (m_{11} m_{32}+m_{21} m_{33})  \\
\end{array}
\right),$$
\vspace*{0.1cm}
$$
t_{23}=\left(
\begin{array}{cc}
 m_{12}^2  \\
 m_{11} m_{22} (m_{12}+m_{23})+m_{12} (m_{12} m_{32}+m_{22} m_{33}) \\
m_{12} m_{22} (m_{21}+m_{32})+m_{21} m_{22} m_{23}+m_{22}^2 m_{33}+m_{23} m_{32} \\
m_{32} (m_{12} m_{32}+m_{22} m_{33}+m_{33}) \\
\end{array}
\right),$$ and 
\vspace*{0.1cm}
$$t_{24}=\left(
\begin{array}{cc}
 0 \\
m_{23} (m_{11} m_{23}+m_{12} m_{33}) \\
m_{23} (m_{21} m_{23}+m_{22} m_{33}+m_{33}) \\
m_{33} (m_{23} m_{32}+m_{33}) \\
\end{array}
\right).$$
Then, $T_1=T_2$ leads to the following equations.
\begin{equation}\label{B307}
m_{11}^2+m_{11} m_{12} m_{21}-m_{11}=0
\end{equation}
\begin{equation}\label{B308}
m_{11} m_{12} m_{22}+m_{12}^2 m_{32}=0
\end{equation}
\begin{equation}\label{B309}
-m_{12}^2 + m_{11} m_{12} m_{23} + m_{12}^2 m_{33}=0
\end{equation}
\begin{equation}\label{B310}
m_{21} (m_{11} m_{22}+m_{21} m_{23})=0
\end{equation}
\begin{equation}\label{B311}
\begin{aligned}
& m_{12} m_{21} + m_{22} (m_{11} m_{22} + m_{21} m_{23}) + (m_{12} m_{22} + m_{22} m_{23}) m_{32} \\
& -m_{11} (m_{11} m_{22} + m_{12} m_{32}) - m_{21} (m_{11} m_{23} + m_{12} m_{33})=0
\end{aligned}
\end{equation}
\begin{equation}\label{B312}
\begin{aligned}
& m_{23} (m_{11} m_{22} + m_{21} m_{23}) - 
m_{12} (m_{11} m_{22} + m_{12} m_{32})\\
& + (m_{12} m_{22} + m_{22} m_{23}) m_{33}
-m_{22} (m_{11} m_{23} + m_{12} m_{33})=0
\end{aligned}
\end{equation}
\begin{equation}\label{B313}
m_{23}^2 - m_{23} (m_{11} m_{23} + m_{12} m_{33})=0
\end{equation}
\begin{equation}\label{B314}
-m_{21}^2 + m_{21} (m_{11} m_{32} + m_{21} m_{33})=0
\end{equation}
\begin{equation}\label{B315}
\begin{aligned}
& -m_{11} (m_{21} m_{22} + m_{22} m_{32}) + m_{22} (m_{11} m_{32} + m_{21} m_{33})\\
& -m_{21} (m_{21} m_{23} + m_{22} m_{33}) + m_{32} (m_{12} m_{32} + m_{22} m_{33})=0
\end{aligned}
\end{equation}
\begin{equation}\label{B316}
\begin{aligned}
&-m_{23} m_{32} - m_{12} (m_{21} m_{22} + m_{22} m_{32}) + m_{23} (m_{11} m_{32} + m_{21} m_{33})\\
 & - m_{22} (m_{21} m_{23} + m_{22} m_{33}) + m_{33} (m_{12} m_{32} + m_{22} m_{33})=0
\end{aligned}
\end{equation}
\begin{equation}\label{B317}
-m_{23} (m_{21} m_{23} + m_{22} m_{33})=0
\end{equation}
\begin{equation}\label{B318}
\begin{aligned}
& -m_{11} (m_{21} m_{22} + m_{22} m_{32}) + m_{22} (m_{11} m_{32} + m_{21} m_{33}) \\
& -m_{21} (m_{21} m_{23} + m_{22} m_{33}) + m_{32} (m_{12} m_{32} + m_{22} m_{33})=0
\end{aligned}
\end{equation}
\begin{equation}\label{B319}
-m_{23} (m_{21} m_{23} + m_{22} m_{33})=0
\end{equation}
\begin{equation}\label{B320}
m_{32}^2 - m_{11} m_{32}^2 - m_{21} m_{32} m_{33}=0
\end{equation}
\begin{equation}\label{B321}
-m_{12} m_{32}^2 - m_{22} m_{32} m_{33}=0
\end{equation}
\begin{equation}\label{B322}
m_{33} - m_{23} m_{32} m_{33} - m_{33}^2=0
\end{equation}

Taking into consideration that $\det(M)\neq0$ and using Mathematica software, we solve the system formed by the equations: Equation(\ref{B301}) through Equation(\ref{B322}). We obtain the following nine solutions.
\begin{itemize}
\item[(0)] $ m_{11}=1, m_{12}=0, m_{21}=0, m_{22}=1, m_{23}=0, m_{32}= 0, m_{33}=1$ with $m_{22}\neq1$ and $m_{12}\neq0$. The matrix  $M_0=\begin{pmatrix}
     1 & 0 & 0\\
      0 & 1 & 0\\
       0 & 0 & 1
\end{pmatrix}=I_3$.
This case leads to the trivial representation of $B_n$, which is rejected since $\nu$ is assumed to be non-trivial.

\item[(1)] $m_{11}=1, m_{12}=0, m_{21}=0, m_{32}=\frac{1 - m_{22}}{m_{23}}, m_{33}=0$ with $m_{22}\neq1$ and $m_{23}\neq0$. The matrix $M_1=\begin{pmatrix}
     1& 0 & 0\\
     0 & m_{22} & m_{23}\\
     0 & \frac{1-m_{22}}{m_{23}} & 0
   \end{pmatrix} $, where $m_{22}\neq1$ and $m_{23}\neq0,$ as required.
\vspace{0.1cm}
\item[(2)] $m_{11}=0, m_{21}=\frac{1-m_{22}}{m_{12}}, m_{23}=0, m_{32}=0, m_{33}=1$ with $m_{22}\neq1$ and $m_{12}\neq0$. The matrix $M_2=\begin{pmatrix}
     0 &m_{12} & 0\\
     \frac{1-m_{22}}{m_{12}} & m_{22} & 0\\
     0 & 0 & 1
   \end{pmatrix} $, where $m_{22}\neq1$ and $m_{12}\neq0$, as required.
\vspace{0.1cm}
\item[(3)] $m_{11}=1, m_{12}=-\frac{m_{22}}{m_{32}}, m_{21}=0, m_{23}= 0, m_{33}=1$ with $m_{22}m_{32}\neq0$. The matrix $M_3=\begin{pmatrix}
     1 &-\frac{m_{22}}{m_{32}} & 0\\
     0 &m_{22} & 0\\
     0 & m_{32} & 1
   \end{pmatrix} $, where $m_{22}m_{32}\neq0$, as required.
\vspace{0.1cm}
\item[(4)] $m_{11}=1, m_{12}=0, m_{21}=-\frac{m_{22}}{m_{23}}, m_{32}=0, m_{33}=1$ with $m_{22}m_{23}\neq0$. The matrix $M_4=\begin{pmatrix}
     1 & 0 & 0\\
     -\frac{m_{22}}{m_{32}}& m_{22} & m_{23}\\
     0 & 0 & 1
   \end{pmatrix} $, where $m_{23}m_{22}\neq0$, as required.
\vspace{0.1cm}
\item[(5)] $m_{11}=1, m_{12}=0, m_{21}=0, m_{22}=0, m_{33}=1-m_{23} m_{32}$ with $m_{32}m_{23}\neq0$. The matrix $M_5=\begin{pmatrix}
     1 & 0 & 0\\
     0 & 0 & m_{23}\\
     0 & m_{32} & 1-m_{23}m_{32}
   \end{pmatrix} $, where $m_{23}m_{32}\neq0$, as required.
 \vspace{0.1cm}
\item[(6)] $m_{11}=1-m_{12} m_{21}, m_{22}=0, m_{23}=0, m_{32}=0, m_{33}=1$ with $m_{12}m_{21}\neq0$. The matrix $M_6=\begin{pmatrix}
     1-m_{12}m_{21} & m_{12} & 0\\
     m_{21} & 0 & 0\\
     0 & 0 & 1
   \end{pmatrix} $, where $m_{12}m_{21}\neq0$, as required.
\vspace{0.1cm}
\item[(7)] $m_{11}=1, m_{12}=0, m_{21}=0, m_{22}= 0, m_{33}=0$ with $m_{32}m_{23}\neq0$. The matrix $M_7=\begin{pmatrix}
     1 & 0 & 0\\
     0 & 0 & m_{23}\\
     0 & m_{32} & 0
   \end{pmatrix},$ where $m_{23}m_{32}\neq0$, as required.
\vspace{0.1cm}
\item[(8)] $m_{11}=0, m_{22}=0, m_{23}=0, m_{32}=0, m_{33}=1$ with $m_{12}m_{21}\neq0$. The matrix  $M_8=\begin{pmatrix}
     0 & m_{12} & 0\\
      m_{21} & 0 & 0\\
       0 & 0 & 1
\end{pmatrix},$ where $m_{12}m_{21}\neq0$, as required.
\end{itemize}
\end{proof}

\vspace*{0.1cm}

We can easily see that for all $n\geq 4$, the complex specialization of the $F$-representation defined in Definition \ref{Fdef} is a special representation of the third family of representations in Theorem \ref{repBn}. That is, $\rho_F=\nu_3$ in the case $m_{22}=-t$ and $m_{32}=t$.

\vspace*{0.1cm}

Now, we extend the result of the previous theorem to the singular braid monoid $SM_n$ for $n\geq 4$.

\vspace*{0.1cm}

\begin{theorem} \label{repSMn}
For $n\geq 4$, let $\nu: B_n\to GL_{n+1}(\mathbb{C})$ be a non-trivial homogeneous $3$-local representation of $B_n$.
Then, any extension $\nu':SM_n\to M_{n+1}(\mathbb{C})$ of $\nu$ to $SM_n$ is equivalent to one of the following representations $\nu'_j$, $1\leq j\leq 8$, where
$$\nu'_{j}(\sigma_i)=\nu(\sigma_i)=\left( \begin{array}{c|@{}c|c@{}}
   \begin{matrix}
     I_{i-1} 
   \end{matrix} 
      & 0 & 0 \\
      \hline
    0 &\hspace{0.2cm} \begin{matrix}
   		M_j
   		\end{matrix}  & 0  \\
\hline
0 & 0 & I_{n-i-1}
\end{array} \right),$$
and
$$\nu'_{j}(\tau_i)=
\left( \begin{array}{c|@{}c|c@{}}
   \begin{matrix}
     I_{i-1} 
   \end{matrix} 
      & 0 & 0 \\
      \hline
    0 &\hspace{0.2cm} \begin{matrix}
   		N_j
   		\end{matrix}  & 0  \\
\hline
0 & 0 & I_{n-i-1}
\end{array} \right),
 $$
for all  $1\leq i \leq n-1$, with the matrices $M_j$ and $N_j$ are given as follows.
\begin{enumerate}
\item $M_1=\begin{pmatrix}
     1& 0 & 0\\
     0 & m_{22} & m_{23}\\
     0 & \frac{1-m_{22}}{m_{23}} & 0
   \end{pmatrix} $, where $m_{22}\neq1, m_{23}\neq0$ and 
   \vspace*{0.1cm}
   \begin{enumerate}
   \item $N_1=\begin{pmatrix}
     1& 0 & 0\\
     0 & n_{22} & m_{23}^2n_{32}\\
     0 & n_{32} & n_{22}
   \end{pmatrix} $ if $m_{22}=0$.
      \vspace*{0.1cm}
   \item $N_1=\begin{pmatrix}
     1& 0 & 0\\
     0 & 1-m_{23}n_{32}& \frac{-m_{23}^2n_{32}}{m_{22}-1}\\
     0 & n_{32} & \frac{-1+m_{22}+m_{23}n_{32}}{m_{22}-1}
   \end{pmatrix}$ otherwise.
   \end{enumerate}
      \vspace*{0.1cm}
   \item $M_2=\begin{pmatrix}
     0 &m_{12} & 0\\
     \frac{1-m_{22}}{m_{12}} & m_{22} & 0\\
     0 & 0 & 1
   \end{pmatrix} $, where $m_{22}\neq1, m_{12}\neq0$ and 
   \vspace*{0.1cm} 
   \begin{enumerate}
   \item $N_2=\begin{pmatrix}
     n_{11}& m_{12}^2n_{21} & 0\\
     n_{21} & n_{11} & 0\\
     0 & 0 & 1
   \end{pmatrix} $ if $m_{22}=0$.
      \vspace*{0.1cm}
   \item $N_2=\left(
\begin{array}{ccc}
 \frac{m_{12} n_{21}+m_{22}-1}{m_{22}-1} & -\frac{m_{12}^2 n_{21}}{m_{22}-1} & 0 \\
 n_{21} & 1-m_{12} n_{21} & 0 \\
 0 & 0 & 1 \\
\end{array}
\right) $ otherwise.
   \end{enumerate}   
      \vspace*{0.1cm}
   \item $M_3=\begin{pmatrix}
     1 &\frac{-m_{22}}{m_{32}} & 0\\
     0 & m_{22} & 0\\
     0 & m_{32} & 1
   \end{pmatrix} $, where $m_{22}m_{32}\neq0$ and \\ 
      \vspace*{0.1cm}
   $N_3=\begin{pmatrix}
     1& n_{12} & 0\\
     0 & 1-m_{32}n_{12}+\frac{m_{32}n_{12}}{m_{22}} & 0\\
     0 & -\frac{m_{32}^2n_{12}}{m_{22}} & 1
   \end{pmatrix} $.
     \vspace*{0.1cm}
\item $M_4=\begin{pmatrix}
     1 & 0 & 0\\
     -\frac{m_{22}}{m_{32}}& m_{22} & m_{23}\\
     0 & 0 & 1
   \end{pmatrix} $, where $m_{23}m_{22}\neq0$ and \\ 
      \vspace*{0.1cm}
   $N_4=\begin{pmatrix}
     1& 0 & 0\\
     n_{21} & 1-m_{23}n_{21}+\frac{m_{23}n_{21}}{m_{22}} & -\frac{m_{23}^2n_{21}}{m_{22}}\\
     0 & 0 & 1
   \end{pmatrix} $. 
     \vspace*{0.1cm}
\item $M_5=\begin{pmatrix}
     1 & 0 & 0\\
     0 & 0 & m_{23}\\
     0 & m_{32} & 1-m_{23}m_{32}
   \end{pmatrix} $, where $m_{23}m_{32}\neq0$ and
     \begin{enumerate}
        \vspace*{0.1cm}
   \item $N_5=\begin{pmatrix}
     1 & 0 & 0\\
     0 & n_{22} & n_{23}\\
     0 & \frac{n_{23}}{m_{23}^2} & n_{22}
   \end{pmatrix}$ if $m_{32}m_{23}=1$.
      \vspace*{0.1cm}
   \item $N_5=\left(
\begin{array}{ccc}
 1 & 0 & 0 \\
 0 & n_{22} & m_{23}-m_{23} n_{22} \\
 0 & m_{32}-m_{32} n_{22} & m_{23} m_{32} (n_{22}-1)+1 \\
\end{array}
\right)$ otherwise.
   \vspace*{0.1cm}
   \end{enumerate}
\item $M_6=\begin{pmatrix}
     1-m_{12}m_{21} & m_{12} & 0\\
     m_{21} & 0 & 0\\
     0 & 0 & 1
   \end{pmatrix} $ where $m_{12}m_{21}\neq0$ and
      \vspace*{0.1cm}
   \begin{enumerate}
   \item $N_6=\begin{pmatrix}
     n_{11} & n_{12} & 0\\
     \frac{n_{12}}{m_{12}^2} & n_{11} & 0\\
     0 & 0 & 1
   \end{pmatrix} $ if $m_{12}m_{21}=1$.
      \vspace*{0.1cm}
   \item $N_6=\left(
\begin{array}{ccc}
 m_{12} m_{21} (n_{22}-1)+1 & m_{12}-m_{12} n_{22} & 0 \\
 m_{21}-m_{21} n_{22} & n_{22} & 0 \\
 0 & 0 & 1 \\
\end{array}
\right)$ otherwise.
   \vspace*{0.1cm}
   \end{enumerate}
\item $M_7=\begin{pmatrix}
     1 & 0 & 0\\
     0 & 0 & m_{23}\\
     0 & m_{32} & 0
   \end{pmatrix} $ where $m_{23}m_{32}\neq0$ and \\
      \vspace*{0.1cm}
    $N_7=\left(
\begin{array}{ccc}
 1 & 0 & 0 \\
 0 & n_{22} & n_{23} \\
 0 & \frac{m_{32}n_{23}}{m_{23}} & n_{22} \\
\end{array}
\right).$
   \vspace*{0.1cm}
\item $M_8=\begin{pmatrix}
     0 &m_{12} & 0\\
     m_{21} & 0 & 0\\
     0 & 0 & 1
   \end{pmatrix} $ where $m_{12}m_{21}\neq1$ and \\
      \vspace*{0.1cm}
    $N_8=\left(
\begin{array}{ccc}
 1 & 0 & 0 \\
 0 & n_{22} & n_{23} \\
 0 & \frac{m_{32}n_{23}}{m_{23}} & n_{22} \\
\end{array}
\right).$
\end{enumerate}
\noindent Moreover, if $\nu'(\tau_i)$ is invertible for all $1\leq i\leq n-1$, then $\nu'$ is a representation of $SB_n$.
\end{theorem}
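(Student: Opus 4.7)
The plan is to mirror the strategy in the proof of Theorem \ref{repBn}, now applied to the mixed relations (3)--(7) of $SM_n$. For each of the eight families $\nu_j$ given by Theorem \ref{repBn}, fix $\nu'(\sigma_i) = \nu(\sigma_i)$ and let $N = (n_{pq})_{1\leq p,q\leq 3}$ be a generic complex $3\times 3$ matrix defining $\nu'(\tau_i)$. The braid relations (1) and (2) for the image of the $\sigma_i$ hold automatically by Theorem \ref{repBn}, so only the defining relations involving the $\tau_i$ need to be exploited.

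First, I would process relation (3), $\tau_i\tau_j = \tau_j\tau_i$ for $|i-j|\geq 2$, and relation (4), $\tau_i\sigma_j = \sigma_j\tau_i$ for $|i-j|\geq 2$. By the same block calculation used in the proof of Theorem \ref{repBn}, these force $n_{13} = n_{31} = 0$ together with a family of scalar relations pairing $n_{pq}$ with the prescribed $m_{pq}$. Relation (5), $\tau_i\sigma_i = \sigma_i\tau_i$, then reads $N M_j = M_j N$ and is a short matrix identity that further trims the unknowns.

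Next, I would invoke the two braid-like mixed relations (6) and (7), namely $\sigma_i\sigma_{i+1}\tau_i = \tau_{i+1}\sigma_i\sigma_{i+1}$ and $\sigma_{i+1}\sigma_i\tau_{i+1} = \tau_i\sigma_{i+1}\sigma_i$. Each yields a $5\times 5$ matrix identity analogous to the identity $T_1 = T_2$ that appeared in the proof of Theorem \ref{repBn}, whose entries are polynomial expressions in the $n_{pq}$ with coefficients in the $m_{pq}$ fixed by $M_j$. For each of the eight fixed choices of $M_j$, this is a finite polynomial system in the $n_{pq}$, which I would solve (as in the proof of Theorem \ref{repBn}) with the aid of Mathematica; the resulting solution, rewritten as a matrix, is exactly the claimed $N_j$.

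The main obstacle, and the explanation for the subcase splits in families (1), (2), (5), and (6), is the presence of degeneracy loci in parameter space where certain coefficients in the induced system vanish and the solution gains an extra degree of freedom. Concretely, the split in (1) and (2) is triggered by $m_{22} = 0$, while the split in (5) and (6) is triggered respectively by $m_{23}m_{32} = 1$ and $m_{12}m_{21} = 1$; I would treat the generic and degenerate branches separately and record the two shapes of $N_j$ accordingly. Finally, the \emph{moreover} clause is automatic: $SB_n$ has the same presentation as $SM_n$ except that the $\tau_i$ are declared invertible, so if each $\nu'(\tau_i)$ is invertible, then $\nu'$ extends uniquely to a representation of $SB_n$.
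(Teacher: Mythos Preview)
Your proposal is correct and follows essentially the same approach as the paper: derive scalar constraints on the $n_{pq}$ from each of the mixed relations (3)--(7), assemble them into a polynomial system, and solve case by case (with Mathematica) for each $M_j$, handling the degenerate parameter loci separately. One small inaccuracy: the matrix identities coming from relations (6) and (7) are $4\times 4$, not $5\times 5$, since $\sigma_i,\sigma_{i+1},\tau_i,\tau_{i+1}$ all act on the four positions $i,i+1,i+2,i+3$; the $5\times 5$ blocks arise only from the distant-commutation relations with $j=i+2$.
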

\begin{proof}
Since $\nu':SM_n\to M_{n+1}(\mathbb{C})$ is the extension of a representation $\nu$ of $B_n$, it follows that $\nu'(\sigma_i)=\nu(\sigma_i)=\left( \begin{array}{c|@{}c|c@{}}
   \begin{matrix}
     I_{i-1} 
   \end{matrix} 
      & 0 & 0 \\
      \hline
    0 &\hspace{0.2cm} \begin{matrix}
   		M
   		\end{matrix}  & 0  \\
\hline
0 & 0 & I_{n-i-1}
\end{array} \right),
\text{ for all } 1\leq i\leq n-1,$
where $M=\begin{pmatrix}
     m_{11}& m_{12} & 0\\
     m_{21}&m_{22} & m_{23}\\
     0 & m_{32} & m_{33}
   \end{pmatrix}$ is the matrix defined by Theorem \ref{repBn} for all $1\leq i\leq n-1$. Set $\nu(\tau_i)=\left( \begin{array}{c|@{}c|c@{}}
   \begin{matrix}
     I_{i-1} 
   \end{matrix} 
      & 0 & 0 \\
      \hline
    0 &\hspace{0.2cm} \begin{matrix}
   		N
   		\end{matrix}  & 0  \\
\hline
0 & 0 & I_{n-i-1}
\end{array} \right),
\text{ for all } 1\leq i\leq n-1,$ where $N=\begin{pmatrix}
     n_{11}&n_{12} & n_{13}\\
     n_{21}&n_{22} & n_{23}\\
     n_{31} & n_{32} & n_{33}
   \end{pmatrix}\in M_3(\mathbb{C}).$ Without loss of generality, we restrict the study of the relation $\tau_i\tau_j=\tau_j\tau_i$ for $|i-j|\geq 2$ to two cases: $j>i+2$ and $j=i+2$.\\
   
For the case $j>i+2$ we have
$$\nu'(\tau_i\tau_j)=\nu'(\tau_j\tau_i)=\left( \begin{array}{c|c|c|c|c}
   \begin{matrix}
     I_{i-1} 
   \end{matrix} 
      & 0 & 0 &0 &0\\
      \hline
    0 & \begin{matrix}
    N 
    \end{matrix} & 0 &0 &0 \\
\hline
0 & 0 & \begin{matrix}
I_{j-i-3}
\end{matrix} &0 &0\\
\hline
0 & 0 & 0 & \begin{matrix}
N
\end{matrix}&0\\
\hline
0 & 0 & 0 & 0&  \begin{matrix}
I_{n-j-1}
\end{matrix}
\end{array} \right).$$

For the case $j=i+2$, direct computations show that
$$\nu(\tau_i\tau_j)=\left( \begin{array}{c|@{}c|c@{}}
   \begin{matrix}
     I_{i-1} 
   \end{matrix} 
      & 0 & 0 \\
      \hline
    0 &\hspace{0.2cm} \begin{matrix}
   		U_1
   		\end{matrix}  & 0  \\
\hline
0 & 0 & I_{n-i-3}
\end{array} \right) 
\text{ and } 
\nu(\tau_j\tau_i)=\left( \begin{array}{c|@{}c|c@{}}
   \begin{matrix}
     I_{i-1} 
   \end{matrix} 
      & 0 & 0 \\
      \hline
    0 &\hspace{0.2cm} \begin{matrix}
   		U_2
   		\end{matrix}  & 0  \\
\hline
0 & 0 & I_{n-i-3}
\end{array} \right) 
$$ 
where 
$$U_1=\left(
\begin{array}{ccccc}
 n_{11} & n_{12} & n_{11}n_{13} & n_{12}n_{13} & n_{13}^2 \\
 n_{21} & n_{22} & n_{11}n_{23} & n_{12}n_{23} & n_{13}n_{23} \\
 n_{31} & n_{32} & n_{11}n_{33} & n_{12}n_{33} & n_{13}n_{33} \\
 0 & 0 & n_{21} & n_{22} & n_{23} \\
 0 & 0 & n_{31} & n_{32} & n_{33} \\
\end{array}
\right)$$ 
and 
$$U_2=\left(
\begin{array}{ccccc}
 n_{11} & n_{12} & n_{13} & 0 & 0 \\
 n_{21} & n_{22} & n_{23} & 0 & 0 \\
 n_{11}n_{31} & n_{11}n_{32} & n_{11}n_{33} & n_{12} & n_{13} \\
 n_{21}n_{31} & n_{21}n_{32} & n_{21}n_{33} & n_{22} & n_{23} \\
 n_{31}^2 & n_{31}n_{32} & n_{31}n_{33} & n_{32} & n_{33} \\
\end{array}
\right).$$

The relation $\tau_i\tau_j=\tau_j\tau_i$ yields $U_1=U_2$ and this implies that $n_{13}=n_{31}=0$. Using this result we get the following equations.
\begin{equation}\label{S301}
-n_{23} + n_{11}n_{23}=0
\end{equation}
\begin{equation}\label{S302}
n_{12}n_{23}=0
\end{equation}
\begin{equation}\label{S303}
n_{32}- n_{11}n_{32}=0
\end{equation}
\begin{equation}\label{S304}
-n_{12} + n_{12} n_{33}=0
\end{equation}
\begin{equation}\label{S305}
n_{21} n_{32}=0
\end{equation}
\begin{equation}\label{S306}
n_{21} - n_{21} n_{33}=0
\end{equation}

We restrict the study of the relation $\tau_i\sigma_j=\sigma_j\tau_i$ for $|i-j|\geq 2$ to 4 cases: (i) $j>i+2$, (ii) $i>j+2$, (iii) $j=i+2$ and (iv) $i=j+2$.\\

For the case (i) $j>i+2$ we have
$$\nu'(\tau_i\sigma_j)=\nu'(\sigma_j\tau_i)=\left( \begin{array}{c|c|c|c|c}
   \begin{matrix}
     I_{i-1} 
   \end{matrix} 
      & 0 & 0 &0 &0\\
      \hline
    0 & \begin{matrix}
    M 
    \end{matrix} & 0 &0 &0 \\
\hline
0 & 0 & \begin{matrix}
I_{j-i-3}
\end{matrix} &0 &0\\
\hline
0 & 0 & 0 & \begin{matrix}
N
\end{matrix}&0\\
\hline
0 & 0 & 0 & 0&  \begin{matrix}
I_{n-j-1}
\end{matrix}
\end{array} \right).$$

For the case (ii) $i>j+2$ we have
$$\nu'(\tau_i\sigma_j)=\nu'(\sigma_j\tau_i)=\left( \begin{array}{c|c|c|c|c}
   \begin{matrix}
     I_{i-1} 
   \end{matrix} 
      & 0 & 0 &0 &0\\
      \hline
    0 & \begin{matrix}
    N 
    \end{matrix} & 0 &0 &0 \\
\hline
0 & 0 & \begin{matrix}
I_{j-i-3}
\end{matrix} &0 &0\\
\hline
0 & 0 & 0 & \begin{matrix}
M
\end{matrix}&0\\
\hline
0 & 0 & 0 & 0&  \begin{matrix}
I_{n-j-1}
\end{matrix}
\end{array} \right).$$

For the case (iii) $j=i+2$, direct computations give that
   $$\nu(\tau_i\sigma_j)=\left( \begin{array}{c|@{}c|c@{}}
   \begin{matrix}
     I_{i-1} 
   \end{matrix} 
      & 0 & 0 \\
      \hline
    0 &\hspace{0.2cm} \begin{matrix}
   		V_1
   		\end{matrix}  & 0  \\
\hline
0 & 0 & I_{n-i-3}
\end{array} \right) 
\text{ and } 
\nu(\sigma_j\tau_i)=\left( \begin{array}{c|@{}c|c@{}}
   \begin{matrix}
     I_{i-1} 
   \end{matrix} 
      & 0 & 0 \\
      \hline
    0 &\hspace{0.2cm} \begin{matrix}
   		V_2
   		\end{matrix}  & 0  \\
\hline
0 & 0 & I_{n-i-3}
\end{array} \right), 
$$
where 
$$V_1=\left(
\begin{array}{ccccc}
 n_{11} & n_{12} & 0 & 0 & 0 \\
 n_{21} & n_{22} & m_{11}n_{23} & m_{12}n_{23} & 0 \\
 0 & n_{32} & m_{11}n_{33} & m_{12}n_{33} & 0 \\
 0 & 0 & m_{21} & m_{22} & m_{23} \\
 0 & 0 & 0 & m_{32} & m_{33} \\
\end{array}
\right)$$
and 
$$V_2=\left(
\begin{array}{ccccc}
 n_{11} & n_{12} & 0 & 0 & 0 \\
 n_{21} & n_{22} & n_{23} & 0 & 0 \\
 0 & m_{11}n_{32} & m_{11}n_{33} & m_{12} & 0 \\
 0 & m_{21}n_{32} & m_{21}n_{33} & m_{22} & m_{23} \\
 0 & 0 & 0 & m_{32} & m_{33} \\
\end{array}
\right).$$

\vspace*{0.1cm}

Hence, the relation $\tau_i\sigma_j=\sigma_j\tau_i$ yields $V_1=V_2$ and we get the following equations.
\begin{equation}\label{S307}
-n_{23} + m_{11} n_{23}=0
\end{equation}
\begin{equation}\label{S308}
m_{12} n_{23}=0
\end{equation}
\begin{equation}\label{S309}
n_{32} - m_{11} n_{32}=0
\end{equation}
\begin{equation}\label{S310}
-m_{12} + m_{12} n_{33}=0
\end{equation}
\begin{equation}\label{S311}
m_{21} n_{32}=0
\end{equation}
\begin{equation}\label{S312}
m_{21} - m_{21} n_{33}=0
\end{equation}

\vspace*{0.1cm}

For the case (iv) $i=j+2$ we have, by direct computations, that
   $$\nu(\tau_i\sigma_j)=\left( \begin{array}{c|@{}c|c@{}}
   \begin{matrix}
     I_{i-1} 
   \end{matrix} 
      & 0 & 0 \\
      \hline
    0 &\hspace{0.2cm} \begin{matrix}
   		W_1
   		\end{matrix}  & 0  \\
\hline
0 & 0 & I_{n-i-3}
\end{array} \right) 
\text{ and } 
\nu(\sigma_j\tau_i)=\left( \begin{array}{c|@{}c|c@{}}
   \begin{matrix}
     I_{i-1} 
   \end{matrix} 
      & 0 & 0 \\
      \hline
    0 &\hspace{0.2cm} \begin{matrix}
   		W_2
   		\end{matrix}  & 0  \\
\hline
0 & 0 & I_{n-i-3}
\end{array} \right), 
$$ 
where 
$$W_1=\left(
\begin{array}{ccccc}
 m_{11} & m_{12} & 0 & 0 & 0 \\
 m_{21} & m_{22} & m_{23} & 0 & 0 \\
 0 & m_{32}n_{11} & m_{33}n_{11} & n_{12} &0 \\
 0 & m_{32}n_{21} & m_{33}n_{21} & n_{22} & n_{23} \\
 0 & 0 & 0 & n_{32} & n_{33} \\
\end{array}
\right)$$
and 
$$W_2=\left(
\begin{array}{ccccc}
 m_{11} & m_{12} & 0 & 0 & 0 \\
 m_{21} & m_{22} & m_{23}n_{11} & m_{23}n_{12} & 0 \\
  0 & m_{32} & m_{33}n_{11} & m_{33}n_{12} & 0 \\
 0 & 0 & n_{21} & n_{22} & n_{23}\\
 0 & 0 & 0 & n_{32} & n_{33} \\
\end{array}
\right).$$

\vspace*{0.1cm}

Hence, the relation $\tau_i\sigma_j=\sigma_j\tau_i$ yields $W_1=W_2$.  Consequently, we have the following equations.
\begin{equation}\label{S313}
m_{23} - m_{23} n_{11}=0
\end{equation}
\begin{equation}\label{S314}
m_{23} n_{12}=0
\end{equation}
\begin{equation}\label{S315}
-m_{32} + m_{32} n_{11}=0
\end{equation}
\begin{equation}\label{S316}
n_{12} - m_{33} n_{12}=0
\end{equation}
\begin{equation}\label{S317}
m_{32} n_{21}=0
\end{equation}
\begin{equation}\label{S318}
-n_{21} + m_{33} n_{21}=0
\end{equation}

\vspace*{0.1cm}

Now, direct computations show that for $1\leq i\leq n-1$ we have the following.

\vspace*{0.1cm}

$$\nu(\tau_i\sigma_i)=\left( \begin{array}{c|@{}c|c@{}}
   \begin{matrix}
     I_{i-1} 
   \end{matrix} 
      & 0 & 0 \\
      \hline
    0 &\hspace{0.2cm} \begin{matrix}
   		X_1
   		\end{matrix}  & 0  \\
\hline
0 & 0 & I_{n-i-3}
\end{array} \right) 
\text{ and }
\nu(\sigma_i\tau_i)=\left( \begin{array}{c|@{}c|c@{}}
   \begin{matrix}
     I_{i-1} 
   \end{matrix} 
      & 0 & 0 \\
      \hline
    0 &\hspace{0.2cm} \begin{matrix}
   		X_2
   		\end{matrix}  & 0  \\
\hline
0 & 0 & I_{n-i-3}
\end{array} \right),
$$ 
where
$$X_1=\left(
\begin{array}{ccc}
 m_{11} n_{11}+m_{21}n_{12} & m_{12} n_{11}+m_{22} n_{12} & m_{23} n_{12} \\
m_{11} n_{21}+m_{21} n_{22} & m_{12} n_{21}+m_{22} n_{22}+m_{32} n_{23} & m_{23} n_{22}+m_{33} n_{23} \\
m_{21} n_{32} & m_{22}n_{32}+m_{32}n_{33} & m_{23} n_{32}+m_{33} n_{33} \\
\end{array}
\right),$$
and  
$$X_2=\left(
\begin{array}{ccc}
 m_{11} n_{11}+m_{12}n_{21} & m_{11}n_{12}+m_{12} n_{22} & m_{12} n_{23} \\
 m_{21} n_{11}+m_{22} n_{21} & m_{21} n_{12}+m_{22}n_{22}+m_{23} n_{32} & m_{22} n_{23}+m_{23} n_{33} \\
 m_{32} n_{21} & m_{32} n_{22}+m_{33} n_{32} & m_{32} n_{23}+m_{33} n_{33} \\
\end{array}
\right).$$

\vspace*{0.1cm}

The relation $\tau_i\sigma_i=\sigma_i\tau_i$ for $1\leq i\leq n-1$ yields $X_1=X_2$ and we obtain the following equations.
\begin{equation}\label{S319}
-m_{21} n_{12} + m_{12} n_{21}=0
\end{equation}
\begin{equation}\label{S320}
-m_{12} n_{11} + m_{11} n_{12} - m_{22} n_{12} + m_{12} n_{22}=0
\end{equation}
\begin{equation}\label{S321}
-m_{23} n_{12} + m_{12} n_{23}=0
\end{equation}
\begin{equation}\label{S322}
m_{21} n_{11} - m_{11} n_{21} + m_{22} n_{21} - m_{21} n_{22}=0
\end{equation}
\begin{equation}\label{S323}
m_{21} n_{12} - m_{12} n_{21} - m_{32} n_{23} + m_{23} n_{32}=0
\end{equation}
\begin{equation}\label{S324}
-m_{23} n_{22} + m_{22} n_{23} - m_{33} n_{23} + m_{23} n_{33}=0
\end{equation}
\begin{equation}\label{S325}
m_{32} n_{21} - m_{21} n_{32}=0
\end{equation}
\begin{equation}\label{S326}
m_{32} n_{22} - m_{22} n_{32} + m_{33} n_{32} - m_{32} n_{33}=0
\end{equation}
\begin{equation}\label{S327}
m_{32} n_{23} - m_{23} n_{32}=0
\end{equation}

\vspace*{0.1cm}

By direct computations we get 
$$\nu'(\sigma_i\sigma_{i+1}\tau_{i})=\left( \begin{array}{c|@{}c|c@{}}
   \begin{matrix}
     I_{i-1} 
   \end{matrix} 
      & 0 & 0 \\
      \hline
    0 &\hspace{0.2cm} \begin{matrix}
   		Y_1
   		\end{matrix}  & 0  \\
\hline
0 & 0 & I_{n-i-3}
\end{array} \right)$$ 
and  
$$\nu'(\tau_{i+1}\sigma_i\sigma_{i+1})=\left( \begin{array}{c|@{}c|c@{}}
   \begin{matrix}
     I_{i-1} 
   \end{matrix} 
      & 0 & 0 \\
      \hline
    0 &\hspace{0.2cm} \begin{matrix}
   		Y_2
   		\end{matrix}  & 0  \\
\hline
0 & 0 & I_{n-i-3}
\end{array} \right),$$
where 
$Y_1=(y_{11},y_{12},y_{13},y_{14})$ and $Y_2=(y_{21},y_{22},y_{23},y_{24})$ with
$$y_{11}=\left(
\begin{array}{c}
 m_{11} m_{12} n_{21}+m_{11} n_{11} \\
 n_{21} (m_{11} m_{22}+m_{21} m_{23})+m_{21} n_{11} \\
 n_{21} (m_{11} m_{32}+m_{21} m_{33}) \\
 0   \\
\end{array}
\right),
$$

$$y_{12}=\left(
\begin{array}{c}
 m_{11} m_{12} n_{22}+m_{11} n_{12}+m_{12}^2 n_{32} \\
n_{22} (m_{11} m_{22}+m_{21} m_{23})+n_{32} (m_{12} m_{22}+m_{22} m_{23})+m_{21} n_{12} \\
n_{22} (m_{11} m_{32}+m_{21} m_{33})+n_{32} (m_{12} m_{32}+ m_{22} m_{33}) \\
m_{32} n_{32}  \\
\end{array}
\right),
$$

$$y_{13}=\left(
\begin{array}{c}
 m_{11} m_{12} n_{23}+m_{12}^2 n_{33} \\
 n_{23} (m_{11} m_{22}+m_{21} m_{23})+n_{33} (m_{12} m_{22}+ m_{22} m_{23}) \\
 n_{23} (m_{11} m_{32}+m_{21} m_{33})+n_{33} (m_{12} m_{32}+m_{22} m_{33}) \\
  m_{32} n_{33} \\
\end{array}
\right)
$$

$$y_{14}=\left(
\begin{array}{c}
 0 \\
 m_{23}^2 \\
 m_{23} m_{33} \\
 m_{33} \\
\end{array}
\right),
$$
 
$$y_{21}=\left(
\begin{array}{c}
 m_{11}  \\
 m_{21} n_{11} \\
 m_{21} n_{21} \\
 0 \\
\end{array}
\right),
$$

$$y_{22}=\left(
\begin{array}{c}
m_{11} m_{12} \\
m_{11} (m_{22} n_{11}+m_{32} n_{12})+m_{21} (m_{23} n_{11}+m_{33} n_{12}) \\
m_{11} (m_{22} n_{21}+m_{32} n_{22})+m_{21} (m_{23} n_{21}+m_{33} n_{22}) \\
m_{11} m_{32} n_{32}+m_{21} m_{33} n_{32} \\
\end{array}
\right),
$$

$$y_{23}=\left(
\begin{array}{c}
m_{12}^2 \\
m_{12} (m_{22} n_{11}+m_{32} n_{12})+m_{22} (m_{23} n_{11}+m_{33} n_{12}) \\
m_{12} (m_{22} n_{21}+m_{32} n_{22})+m_{22} (m_{23} n_{21}+m_{33} n_{22})+m_{32} n_{23} \\
m_{12} m_{32} n_{32}+m_{22} m_{33} n_{32}+m_{32} n_{33} \\
\end{array}
\right),$$

and 

$$y_{24}=\left(
\begin{array}{c}
 0 \\
m_{23} (m_{23} n_{11}+m_{33} n_{12}) \\
m_{23} (m_{23} n_{21}+m_{33} n_{22})+m_{33} n_{23} \\
m_{23} m_{33} n_{32}+m_{33} n_{33} \\
\end{array}
\right)$$
\vspace*{0.1cm}

The relation $\sigma_i\sigma_{i+1}\tau_i=\tau_{i+1}\sigma_{i}\sigma_{i+1}$ implies that $Y_1=Y_2$ and we get the following equations.
\begin{equation}\label{S328}
m_{11} (-1 + n_{11} + m_{12} n_{21})=0
\end{equation}
\begin{equation}\label{S329}
(n_{12} + m_{12} (-1 + n_{22})) + m_{12}^2 n_{32}=0
\end{equation}
\begin{equation}\label{S330}
m_{12} (m_{11} n_{23} + m_{12} (-1 + n_{33}))=0
\end{equation}
\begin{equation}\label{S331}
(m_{11} m_{22} + m_{21} m_{23}) n_{21}=0
\end{equation}
\begin{equation}\label{S332}
\begin{aligned}
-m_{11} (m_{32} n_{12} + m_{22} (n_{11} - n_{22})) +
 m_{21} (-m_{23} n_{11} + n_{12} - m_{33} n_{12} + m_{23} n_{22})\\
 + m_{22} (m_{12} + m_{23}) n_{32}=0
 \end{aligned}
\end{equation}
\begin{equation}\label{S333}
\begin{aligned}
m_{21} m_{23} n_{23} - m_{12} (m_{32} n_{12} + m_{22} (n_{11} - n_{33})) \\
+m_{22} (-m_{23} n_{11} - m_{33} n_{12} + m_{11} n_{23} + m_{23}n_{33})=0
\end{aligned}
\end{equation}
\begin{equation}\label{S334}
m_{23} (m_{23} - m_{23} n_{11} - m_{33} n_{12})=0
\end{equation}
\begin{equation}\label{S335}
(m_{11} m_{32} + m_{21} (-1 + m_{33})) n_{21}=0
\end{equation}
\begin{equation}\label{S336}
-m_{11} m_{22} n_{21} - m_{21} m_{23} n_{21} + m_{12} m_{32} n_{32} + m_{22} m_{33} n_{32}=0
\end{equation}
\begin{equation}\label{S337}
\begin{aligned}
(-m_{32} + m_{11} m_{32} + m_{21} m_{33}) n_{23} - m_{12} (m_{22} n_{21} + m_{32} (n_{22} - n_{33}))\\
 -m_{22} (m_{23} n_{21} + m_{33} n_{22} - m_{33} n_{33})=0
\end{aligned}
\end{equation}
\begin{equation}\label{S338}
m_{23} m_{33} - m_{23} (m_{23} n_{21} + m_{33} n_{22}) - m_{33} n_{23}=0
\end{equation}
\begin{equation}\label{S339}
(m_{32} - m_{11} m_{32} - m_{21} m_{33}) n_{32}=0
\end{equation}
\begin{equation}\label{S340}
(m_{12} m_{32} + m_{22} m_{33}) n_{32}=0
\end{equation}
\begin{equation}\label{S341}
m_{33} (-1 + m_{23} n_{32} + n_{33})=0
\end{equation}

\vspace*{0.1cm}

By direct computations we get 
$$\nu'(\sigma_{i+1}\sigma_i\tau_{i+1})=\left( \begin{array}{c|@{}c|c@{}}
   \begin{matrix}
     I_{i-1} 
   \end{matrix} 
      & 0 & 0 \\
      \hline
    0 &\hspace{0.2cm} \begin{matrix}
   		Z_1
   		\end{matrix}  & 0  \\
\hline
0 & 0 & I_{n-i-3}
\end{array} \right)$$ 
and  
$$\nu'(\tau_i\sigma_{i+1}\sigma_i)=\left( \begin{array}{c|@{}c|c@{}}
   \begin{matrix}
     I_{i-1} 
   \end{matrix} 
      & 0 & 0 \\
      \hline
    0 &\hspace{0.2cm} \begin{matrix}
   		Z_2
   		\end{matrix}  & 0  \\
\hline
0 & 0 & I_{n-i-3}
\end{array} \right),$$
where 
$Z_1=(z_{11},z_{12},z_{13},z_{14})$ and $Z_2=(z_{21},z_{22},z_{23},z_{24})$. Here, 
$$z_{11}=
\left(
\begin{array}{c}
 m_{11} \\
 m_{11} m_{21} \\
 m_{21}^2 \\
 0  \\
\end{array}
\right),
$$

$$z_{12}=
\left(
\begin{array}{c}
 m_{12} n_{11} \\
n_{11} (m_{11} m_{22}+m_{12} m_{32})+n_{21} (m_{11} m_{23}+m_{12} m_{33}) \\
n_{21} (m_{21} m_{23}+m_{22} m_{33})+n_{11} (m_{21} m_{22}+m_{22} m_{32}) \\
m_{32}^2 n_{11}+m_{32} m_{33} n_{21} \\
\end{array}
\right),
$$

$$z_{13}=\left(
\begin{array}{c}
 m_{12} n_{12}\\
 n_{12} (m_{11} m_{22}+m_{12} m_{32})+n_{22} (m_{11} m_{23}+m_{12} m_{33}) \\
 n_{22} (m_{21} m_{23}+m_{22} m_{33})+n_{12} (m_{21} m_{22}+m_{22} m_{32})+m_{23} n_{32} \\
 m_{32}^2 n_{12}+m_{32} m_{33} n_{22}+m_{33} n_{32} \\
\end{array}
\right),
$$

$$z_{14}=\left(
\begin{array}{c}
  0 \\
 n_{23} (m_{11} m_{23}+m_{12} m_{33}) \\
 n_{23} (m_{21} m_{23}+m_{22} m_{33})+m_{23} n_{33} \\
 m_{32} m_{33} n_{23}+m_{33} n_{33} \\
\end{array}
\right),
$$
 
$$z_{21}=\left(
\begin{array}{c}
 m_{11} m_{21} n_{12}+m_{11} n_{11} \\
 m_{21} (m_{11} n_{22}+m_{21} n_{23})+m_{11} n_{21} \\
 m_{21} (m_{11} n_{32}+m_{21} n_{33}) \\
 0  \\
\end{array}
\right),
$$

$$z_{22}=\left(
\begin{array}{c}
m_{11} m_{22} n_{12}+m_{12} m_{32} n_{12}+m_{12} n_{11} \\
m_{22} (m_{11} n_{22}+m_{21} n_{23})+m_{32} (m_{12} n_{22}+m_{22} n_{23})+m_{12} n_{21} \\
m_{22} (m_{11} n_{32}+m_{21} n_{33})+m_{32} (m_{12} n_{32}+m_{22} n_{33}) \\
m_{32}^2 \\
\end{array}
\right),
$$

$$z_{23}=\left(
\begin{array}{c}
 m_{11} m_{23} n_{12}+m_{12} m_{33} n_{12} \\
 m_{23} (m_{11} n_{22}+m_{21} n_{23})+m_{33} (m_{12} n_{22}+m_{22} n_{23}) \\
 m_{23} (m_{11} n_{32}+m_{21} n_{33})+m_{33} (m_{12} n_{32}+m_{22} n_{33}) \\
 m_{32} m_{33} \\
\end{array}
\right),
$$
 and 
 
 $$z_{24}=\left(
\begin{array}{c}
 0 \\
 m_{23} n_{23} \\
 m_{23} n_{33} \\
 m_{33} \\
\end{array}
\right).
$$
\vspace*{0.1cm}

The relation $\sigma_{i+1}\sigma_i\tau_{i+1}=\tau_i\sigma_{i+1}\sigma_i$ implies that $Z_1=Z_2$ and we get the following equations.
\begin{equation}\label{S342}
m_{11} - m_{11} n_{11} - m_{11} m_{21} n_{12}=0
\end{equation}
\begin{equation}\label{S343}
-m_{11} m_{22} n_{12} - m_{12} m_{32} n_{12}=0
\end{equation}
\begin{equation}\label{S344}
m_{12} n_{12} - m_{11} m_{23} n_{12} - m_{12} m_{33} n_{12}=0
\end{equation}
\begin{equation}\label{S345}
m_{11} m_{21} - m_{11} n_{21} - m_{21} (m_{11} n_{22} + m_{21} n_{23})=0
\end{equation}
\begin{equation}\label{S346}
\begin{aligned}
&(m_{11} m_{22} + m_{12} m_{32}) n_{11} - m_{12} n_{21} + (m_{11} m_{23} + m_{12} m_{33}) n_{21} \\
& -m_{22} (m_{11} n_{22} + m_{21} n_{23}) - m_{32}(m_{12} n_{22} + m_{22} n_{23})=0.
\end{aligned}
\end{equation}
\begin{equation}\label{S347}
\begin{aligned}
& (m_{11} m_{22} + m_{12} m_{32}) n_{12} + (m_{11} m_{23} + m_{12} m_{33}) n_{22}\\
& -m_{23} (m_{11} n_{22} + m_{21} n_{23}) - m_{33} (m_{12} n_{22} + m_{22} n_{23})=0.
\end{aligned}
\end{equation}
\begin{equation}\label{S348}
-m_{23} n_{23} + (m_{11} m_{23} + m_{12} m_{33}) n_{23}=0
\end{equation}
\begin{equation}\label{S349}
m_{21}^2 - m_{21} (m_{11} n_{32} + m_{21} n_{33})=0
\end{equation}
\begin{equation}\label{S350}
\begin{aligned}
& (m_{21} m_{22} + m_{22} m_{32}) n_{11} + (m_{21} m_{23} + m_{22} m_{33}) n_{21} \\
& -m_{22} (m_{11} n_{32} + m_{21} n_{33}) - m_{32} (m_{12} n_{32} + m_{22} n_{33})=0.
\end{aligned}
\end{equation}
\begin{equation}\label{S351}
\begin{aligned}
&(m_{21} m_{22} + m_{22} m_{32}) n_{12} + (m_{21} m_{23} + m_{22} m_{33}) n_{22} + m_{23} n_{32} \\
& -m_{23} (m_{11} n_{32} + m_{21} n_{33}) - m_{33} (m_{12} n_{32} + m_{22} n_{33})=0.
\end{aligned}
\end{equation}
\begin{equation}\label{S352}
(m_{21} m_{23} + m_{22} m_{33}) n_{23}=0
\end{equation}
\begin{equation}\label{S353}
-m_{32}^2 + m_{32}^2 n_{11} + m_{32} m_{33} n_{21}=0
\end{equation}
\begin{equation}\label{S354}
-m_{32} m_{33} + m_{32}^2 n_{12} + m_{32} m_{33} n_{22} + m_{33} n_{32}=0
\end{equation}
\begin{equation}\label{S355}
-m_{33} + m_{32} m_{33} n_{23} + m_{33} n_{33}=0
\end{equation}

\vspace*{0.1cm}

Theorem \ref{repBn} classifies the non-trivial homogeneous $3$-local representations of $B_n$ ($n>3$) into eight cases. For each case, we solve, using Mathematica software, the system $\Sigma$ formed by the equations numbered (\ref{S301}) through (\ref{S355}), using the given matrix $M=M_j$ to derive the corresponding matrix $N=N_j$, $j=1,2,\ldots,8$.\\

\textbf{Case 1.} $m_{11}=1, m_{12}=0, m_{32}=\frac{1-m{_22}}{m_{23}}, m_{33}=0$, $m_{22}\neq1$ and $m_{23}\neq0$. Then $M_1=\left(
\begin{array}{ccc}
 1 & 0 & 0 \\
 0 & m_{22} & m_{23} \\
 0 & \frac{1-m_{22}}{m_{23}} & 0 \\
\end{array}
\right)$. Hence, $\Sigma$ has two solutions: 
\begin{itemize}
\item $m_{22} = 0, n_{11} = 1; n_{12} = 0, n_{21} = 0, n_{23} = m_{23}^2 n_{32}, n_{33} = n_{22},$
\item $ n_{11} = 1, n_{12} = 0, n_{21} = 0, n_{22} = 1 - m_{23} n_{32}, n_{23} = -\frac{m_{23}^2 n_{32}}{-1 + m_{22}}, n_{33} = \frac{-1 + m_{22} + m_{23} n_{32}}{-1 + m_{22}}.$
\end{itemize}
If $m_{22}=0$, the matrices $M_1$ and $N_1$ become.
$$M_1=\left(\begin{array}{ccc}
 1 & 0 & 0 \\
 0 & 0 & m_{23} \\
 0 & \frac{1}{m_{23}} & 0 \\
\end{array}
\right)
 \text{ and }
 N_1=\left(
\begin{array}{ccc}
 1 & 0 & 0 \\
 0 & n_{22} & m_{23}^2 n_{32} \\
 0 & n_{32} & n_{22} \\
\end{array}
\right).
 $$
If $m_{22}\neq0$, the matrices $M$ and $N$ become
 $$M_1=\left(
\begin{array}{ccc}
 1 & 0 & 0 \\
 0 & m_{22} & m_{23} \\
 0 & \frac{1-m_{22}}{m_{23}} & 0 \\
\end{array}
\right)
 \text{ and }
 N_1=\left(
\begin{array}{ccc}
 1 & 0 & 0 \\
 0 & 1-m_{23} n_{32} & -\frac{m_{23}^2 n_{32}}{m_{22}-1} \\
 0 & n_{32} & \frac{m_{22}+m_{23} n_{32}-1}{m_{22}-1} \\
\end{array}
\right).$$ Thus, the result is as required.
\\

\textbf{Case 2.} $m_{11} = 0, m_{21} = \frac{1 - m_{22}}{m_{12}}, m_{23} = 0, m_{32} = 0, m_{33} = 1$, $m_{22}\neq1$ and $m_{12}\neq0$.  Then $M_2=\left(
\begin{array}{ccc}
 0 & m_{12} & 0 \\
 \frac{1-m_{22}}{m_{12}} & m_{22} & 0 \\
 0 & 0 & 1 \\
\end{array}
\right)$. Hence, $\Sigma$ has two solutions:
\begin{itemize}
\item $m_{22} = 0, n_{22} = n_{11}, n_{12} = m_{12}^2 n_{21}, n_{23} = 0, n_{32} = 0, n_{33} = 1,$
\item $n_{11} = \frac{-1 + m_{22} + m_{12} n_{21}}{-1 + m_{22}}, n_{12} = -\frac{m_{12}^2 n_{21}}{-1 + m_{22}}, n_{22} = 1 - m_{12} n_{21}, n_{23} = 0, n_{32} = 0, n_{33} = 1$.
\end{itemize}
If $m_{22}=0$, the matrices $M_2$ and $N_2$ become
$$M_2=\left(
\begin{array}{ccc}
 0 & m_{12} & 0 \\
 \frac{1}{m_{12}} & 0 & 0 \\
 0 & 0 & 1 \\
\end{array}
\right)
\text{ and }
N_2=\left(
\begin{array}{ccc}
 n_{11} & m_{12}^2 n_{21} & 0 \\
 n_{21} & n_{11} & 0 \\
 0 & 0 & 1 \\
\end{array}
\right). $$
If $m_{22}\neq0$, the matrices $M_2$ and $N_2$ become
$$M_2=\left(
\begin{array}{ccc}
 0 & m_{12} & 0 \\
 \frac{1-m_{22}}{m_{12}} & m_{22} & 0 \\
 0 & 0 & 1 \\
\end{array}
\right)
\text{ and }
N_2=\left(
\begin{array}{ccc}
 \frac{m_{12} n_{21}+m_{22}-1}{m_{22}-1} & -\frac{m_{12}^2 n_{21}}{m_{22}-1} & 0 \\
 n_{21} & 1-m_{12} n_{21} & 0 \\
 0 & 0 & 1 \\
\end{array}
\right).$$
Thus, the result is as required.\\

\textbf{Case 3.} $m_{11} = 1, m_{12} = -\frac{m_{22}}{m_{32}}, m_{21} = 0, m_{23} = 0, m_{33} = 1,$ and $m_{22}m_{32}\neq0$. Then $M_3=\left(
\begin{array}{ccc}
 1 & -\frac{m_{22}}{m_{32}} & 0 \\
 0 & m_{22} & 0 \\
 0 & m_{32} & 1 \\
\end{array}
\right)$ and $\Sigma$ has one solution

$$n_{11}=1, n_{21}=0,n_{22}=\frac{m_{22} (-m_{32}) n_{12}+m_{22}+m_{32} n_{12}}{m_{22}},n_{23}=0,n_{32}=-\frac{m_{32}^2n_{12}}{m_{22}},n_{33}=1.$$
Therefore,
$N_3=\left(
\begin{array}{ccc}
 1 & n_{12} & 0 \\
 0 & \frac{m_{32} n_{12}}{m_{22}}-m_{32} n_{12}+1 & 0 \\
 0 & -\frac{m_{32}^2 n_{12}}{m_{22}} & 1 \\
\end{array}
\right),$ as required.\\
 
\textbf{Case 4.} $m_{11}=1, m_{12}=0, m_{21}=-\frac{m_{22}}{m_{23}}, m_{32}=0, m_{33}=1,$ and $ m_{23}m_{22}\neq0.$ Then $M_4=\left(
\begin{array}{ccc}
 1 & 0 & 0 \\
 -\frac{m_{22}}{m_{23}} & m_{22} & m_{23} \\
 0 & 0 & 1 \\
\end{array}
\right)$ and $\Sigma$ has one solution

$$n_{11}=1, n_{12}=0, n_{22}=\frac{m_{22} (-m_{23}) n_{21}+m_{22}+m_{23} n_{21}}{m_{22}}, n_{23}=-\frac{m_{23}^2 n_{21}}{m_{22}}, n_{32}=0, n_{33}=1.$$
Therefore 
$N_4=\left(
\begin{array}{ccc}
 1 & 0 & 0 \\
 n_{21} & \frac{m_{23} n_{21}}{m_{22}}-m_{23} n_{21}+1 & -\frac{m_{23}^2 n_{21}}{m_{22}} \\
 0 & 0 & 1 \\
\end{array}
\right),$ as required.\\

\textbf{Case 5.} $m_{11}=1, m_{12}=0, m_{21}=0, m_{22}=0, m_{33}=1-m_{23} m_{32},$ and $m_{23}m_{32}\neq0$. Then $M_5=\left(
\begin{array}{ccc}
 1 & 0 & 0 \\
 0 & 0 & m_{23} \\
 0 & m_{32} & 1-m_{23} m_{32} \\
\end{array}
\right)$. Hence, $\Sigma$ has the two solutions
\begin{itemize}
\item $m_{32}=\frac{1}{m_{23}}, n_{11}=1, n_{12}=0, n_{21}=0, n_{32}=\frac{n_{23}}{m_{23}^2}, n_{33}=n_{22},$
\item $n_{11}=1, n_{12}=0, n_{21}=0, n_{23}=m_{23}-m_{23} n_{22},n_{32}=m_{32}-m_{32} n_{22}, n_{33}=m_{23} m_{32} n_{22}-m_{23} m_{32}+1.$
\end{itemize}
If $m_{32}m_{23}=1$, the matrices $M_5$ and $N_5$ become
$$M_5=
\left(
\begin{array}{ccc}
 1 & 0 & 0 \\
 0 & 0 & m_{23} \\
 0 & \frac{1}{m_{23}} & 0 \\
\end{array}
\right)
\text{ and }
N_5=\left(
\begin{array}{ccc}
 1 & 0 & 0 \\
 0 & n_{22} & n_{23} \\
 0 & \frac{n_{23}}{m_{23}^2} & n_{22} \\
\end{array}
\right).$$
If $m_{32}m_{23}\neq1$, the matrices $M_5$ and $N_5$ become
$$M_5=\left(
\begin{array}{ccc}
 1 & 0 & 0 \\
 0 & 0 & m_{23} \\
 0 & m_{32} & 1-m_{23} m_{32} \\
\end{array}
\right)
\text{ and }
N_5=\left(
\begin{array}{ccc}
 1 & 0 & 0 \\
 0 & n_{22} & m_{23}-m_{23} n_{22} \\
 0 & m_{32}-m_{32} n_{22} & m_{23} m_{32} (n_{22}-1)+1 \\
\end{array}
\right).$$
Thus, the result is as required.\\

\textbf{Case 6.}
$m_{11}=1-m_{12} m_{21}, m_{22}=0, m_{23}=0, m_{32}=0, m_{33}=1,$ and $m_{12}m_{21}\neq0.$ Then $M_6=\left(
\begin{array}{ccc}
 1-m_{12} m_{21} & m_{12} & 0 \\
 m_{21} & 0 & 0 \\
 0 & 0 & 1 \\
\end{array}
\right)$. Hence, $\Sigma$ has two solutions.
\begin{itemize}
\item $m_{21}=\frac{1}{m_{12}}, n_{11}=n_{22}, n_{21}=\frac{n_{12}}{m_{12}^2}, n_{23}=0, n_{32}=0, n_{33}=1$,
\item $n_{11}=m_{12} m_{21} n_{22}-m_{12} m_{21}+1,n_{12}=m_{12}-m_{12} n_{22},n_{21}=m_{21}-m_{21} n_{22}, n_{23}=0, n_{32}=0, n_{33}=1.$
\end{itemize}
If $m_{12}m_{21}=1$, the matrices $M_6$ and $N_6$ become
$$M_6=\left(
\begin{array}{ccc}
 0 & m_{12} & 0 \\
 \frac{1}{m_{12}} & 0 & 0 \\
 0 & 0 & 1 \\
\end{array}
\right) 
\text{ and }
N_6=\left(
\begin{array}{ccc}
 n_{22} & n_{12} & 0 \\
 \frac{n_{12}}{m_{12}^2} & n_{22} & 0 \\
 0 & 0 & 1 \\
\end{array}
\right).$$
If $m_{12}m_{21}\neq1$, the matrices $M_6$ and $N_6$ become
$$M_6=\left(
\begin{array}{ccc}
 1-m_{12} m_{21} & m_{12} & 0 \\
 m_{21} & 0 & 0 \\
 0 & 0 & 1 \\
\end{array}
\right) 
\text{ and }
N_6=\left(
\begin{array}{ccc}
 m_{12} m_{21} (n_{22}-1)+1 & m_{12}-m_{12} n_{22} & 0 \\
 m_{21}-m_{21} n_{22} & n_{22} & 0 \\
 0 & 0 & 1 \\
\end{array}
\right).$$
Thus, the result is as required.\\

\textbf{Case 7.}
$m_{11} = 1, m_{12} = 0, m_{21} = 0, m_{22} = 0, m_{33} = 0$ and $m_{23}m_{32}\neq0$. Then $M_7=\left(
\begin{array}{ccc}
 1 & 0 & 0 \\
 0 & 0 & m_{23} \\
 0 & m_{32} & 0 \\
\end{array}
\right)$ and $\Sigma$ has the following solution

$n_{11}=1, n_{12}=n_{21}=0, n_{32}=\frac{m_{32}n_{23}}{m_{23}}, n_{33}=n_{22}$. Thus, $N_7=\left(
\begin{array}{ccc}
 1 & 0 & 0 \\
 0 & n_{22} & n_{23} \\
 0 & \frac{m_{32}n_{23}}{m_{23}} & n_{22} \\
\end{array}
\right),$ as required.

\textbf{Case 8.}
$m_{11} = 0, m_{22} = 0, m_{23} = 0, m_{32} = 0, m_{33} = 1$, and $m_{12}m_{21}\neq0$. Then $M_8=\left(
\begin{array}{ccc}
 1 & m_{12} & 0 \\
 m_{21} & 0 & 0 \\
 0 & 0 & 0 \\
\end{array}
\right)$ and $\Sigma$ has the following solution

$n_{21}=\frac{m_{21}n_{12}}{m_{12}}, n_{22}=n_{11}, n_{23}=0, n_{32}=0, n_{33}=1$. Thus, 
$N_8=\left(
\begin{array}{ccc}
 n_{11} & n_{12} & 0 \\
 \dfrac{m_{21} n_{12}}{m_{12}} & n_{11} & 0 \\
 0 & 0 & 1 \\
\end{array}
\right),$ as required.
\end{proof}

\vspace*{0.1cm}

In Theorems \ref{repBn} and \ref{repSMn}, we classify all non-trivial homogeneous $3$-local representations of $B_n$ and their homogeneous $3$-local extensions to $SM_n$ for all $n\geq 4$. The remaining case is for $n=3$. This leads to ask the following question.

\vspace*{0.1cm}

\begin{question}
Let $\nu :B_3\to GL_{4}(\mathbb{C})$ be a non-trivial homogeneous $3$-local representation of $B_3$. Let $\nu' :SM_3\to M_{4}(\mathbb{C})$ be a non-trivial homogeneous $3$-local representation of $SM_3$ which is an extension of $\nu$. What are all possible classifications of $\nu$ and $\nu'?$
\end{question}

\vspace*{0.1cm}

Now, we notice that the $F$-representation is shown to be reducible in Theorem \ref{Fre}, but we do not know whether its homogeneous $3$-local extension to $SM_n$ is reducible or not. This leads us to ask the following question.

\vspace*{0.1cm}

\begin{question}
Is it true that the homogeneous $3$-local extension of the $F$-representation of $B_n$ to $SM_n$ is reducible?
\end{question}

\vspace*{0.1cm}

Also, in a similar way as in Question \ref{qq} in the previous section, we can ask the following question.

\vspace*{0.1cm}

\begin{question}
Can we find a homogeneous $3$-local representation of $B_n$ which is reducible and its homogeneous $3$-local extension to $SM_n$ is irreducible?
\end{question}

\vspace*{0.1cm}

\section{Conclusions} 
In this paper, we worked on classifying some local representations of the braid group $B_n$ and the singular braid monoid $SM_n$, which is a generalization of the work of Mikhalchishina in two ways. First, as Mikhalchishina classified all homogeneous $2$-local representations of $B_n$ for all $n\geq 3$, we classified all homogeneous $3$-local representations of $B_n$ for all $n \geq 4$. Second, we extend the result of Mikhalchishina and our first result to $SM_n$; that is, we classified all homogeneous $2$-local representations of $SM_n$ for all $n\geq 2$ and all homogeneous $3$-local representations of $SM_n$ for all $n\geq 4$. The third result is that we classified all $\Phi$-type extensions of all homogeneous $2$-local representation of $B_n$ to $SM_n$ for all $n\geq 4$. Lastly, we offered a few significant questions for further research.
\vspace*{0.1cm}


\end{document}